\newcommand{\norm}[1]{\| #1 \|}
\newcommand{\sprod}[2]{\langle #1, #2 \rangle}
\theoremstyle{plain}
\newtheorem{theorem}{Theorem}
\newtheorem{lemma}[theorem]{Lemma}
\theoremstyle{definition}
\title{Robust Multigrid for Isogeometric Analysis Based on Stable Splittings of Spline Spaces}
\author{Clemens Hofreither \footnote{The first author was supported by the
  National Research Network ``Geometry + Simulation'' (NFN S117, 2012--2016),
  funded by the Austrian Science Fund (FWF)}\\Department of Computational Mathematics,\\
  Johannes Kepler University Linz, Austria\\
  {chofreither@numa.uni-linz.ac.at}
  \and
  Stefan Takacs\\Johann Radon Institute for Computational and
  Applied Mathematics (RICAM),\\Austrian Academy of Sciences, Austria\\
  {stefan.takacs@ricam.oeaw.ac.at}
}
\begin{document}

\maketitle

\begin{abstract}
We present a robust and efficient multigrid method for single-patch isogeometric
discretizations using tensor product B-splines of maximum smoothness.  Our
method is based on a stable splitting of the spline space into a large subspace
of ``interior'' splines which satisfy a robust inverse inequality, as well as
one or several smaller subspaces which capture the boundary effects responsible
for the spectral outliers which occur in Isogeometric Analysis.  We then
construct a multigrid smoother based on an additive subspace correction approach,
applying a different smoother to each of the subspaces.  For the interior
splines, we use a mass smoother, whereas the remaining components are treated
with suitably chosen Kronecker product smoothers or direct solvers.

We prove that the resulting multigrid method exhibits iteration numbers which
are robust with respect to the spline degree and the mesh size.
Furthermore, it can be efficiently realized for discretizations of
problems in arbitrarily high geometric dimension.
Some numerical examples illustrate the theoretical results and show that the
iteration numbers also scale relatively mildly with the problem dimension.
\end{abstract}

%%%%%%%%%%%%%%%%%%%%%%%%%%%%%%%%%%%%%%%%%%%%%%%%%%%%%%%%%%%%%%%%%%%%%%%%%%%%%%%%

\section{Introduction}
\label{sec:intro}

Isogeometric Analysis (IgA) is a method for the numerical solution of partial
differential equations (PDEs) introduced in the seminal paper
\cite{Hughes:2005} which has since attracted a sizable research community.
Spline spaces, such as spaces spanned by tensor product B-splines or NURBS, are
commonly used for geometry representation in industrial CAD systems.  The
foundational idea in IgA is to use such spline spaces both for the
representation of the computational domain and for the discretization of the
quantities of interest when solving a PDE.  The overall goal is to create a
tighter integration between geometric design and analysis.

There is a need for efficient solvers for the large, sparse linear
systems which arise when applying isogeometric discretizations to boundary
value problems.
By now, most established solution strategies known from the finite element
literature have been applied in one way or another to IgA:
among these, direct solvers \cite{CollierEtAl:2012},
non-overlapping and overlapping domain decomposition methods
\cite{KleissEtAl:2012,DaVeigaEtAl:2012,DaVeigaEtAl:2013,DaVeigaEtAl:2014},
and multilevel and multigrid methods
\cite{BuffaEtAl:2013,GahalautEtAl:2013,HofreitherZulehner:2014c,Donatelli:2014a,HTZ:2016}.
A recent contribution \cite{Sangalli:2016} constructs preconditioners based on
fast solvers for Sylvester equations.
The above list is certainly not comprehensive.

In IgA, we typically encounter as discretization parameters the mesh size and
the spline degree.  In the early IgA solver literature, the focus was on
translating solvers from the finite element world to IgA with minimal
adaptations. As a rule, it was found that such an approach results in methods
that work well for low spline degrees, but deteriorate in performance as the
degree is increased; often dramatically so.  This motivated the search for IgA
solvers that are robust not only with respect to the mesh size (which is often
easy to achieve), but also with respect to the spline degree.

Within the class of multigrid methods for IgA, advances towards a robust method
were made using two approaches.  In \cite{Donatelli:2014b}, a careful analysis
of the symbol of isogeometric stiffness matrices served as the basis for the
construction of multigrid methods. This theoretical approach is somewhat
related to the technique known as Local Fourier Analysis (LFA) in the multigrid
literature (see, e.g., \cite{Trottenberg:2001}).  It appears that the method
presented in \cite{Donatelli:2014b} is roughly comparable to the one studied in
\cite{HofreitherZulehner:2014b}, which uses mass matrices as multigrid
smoothers, an approach itself motivated by LFA.  For both methods, an
increase in the number of smoothing steps, roughly linearly with the spline
degree, is required in order to maintain robust convergence.  They can thus not
be considered totally robust and efficient in the strict meaning that we will
use in the present work.

A second approach towards robust and efficient multigrid was presented
in \cite{HTZ:2016}. Based on a robust inverse inequality and approximation
error estimate in a large subspace of maximally smooth spline spaces derived in
\cite{Takacs:Takacs:2015}, it was shown that mass matrices can be used
as robust smoothers in this large subspace. For the remaining, relatively
few degrees of freedom, a low-rank correction was constructed.
(These degrees of freedom are associated with the boundary of the domain
and cannot be captured by LFA, which assumes periodic boundary conditions.)
This approach resulted in a provably robust and efficient multigrid
method for two-dimensional problems with splines of maximum smoothness.
It was however not clear how to extend this approach efficiently to
three and higher dimensions.

The present work can be viewed as a continuation of \cite{HTZ:2016}.
Based on the theoretical results from \cite{Takacs:Takacs:2015}, we construct
a splitting of the tensor product spline space into a large, regular interior
part and several smaller spaces which capture boundary effects.  The
splitting is $L_2$-orthogonal and $H^1$-stable with respect to both
the mesh size and the spline degree.
This stability enables us to construct a multigrid smoother based on
an additive subspace correction approach, applying a different smoother
in each of the subspaces.
In the regular interior subspace, we use a mass smoother. In the other
subspaces, we construct smoothers which exploit the particular structure
of the subspaces while still permitting an efficient application through
a Kronecker product representation. In one small subspace associated with
the corners of the domain, we apply a direct solver.

Unlike the low-rank correction approach from \cite{HTZ:2016}, the subspace
correction approach generalizes easily to three-dimensional problems, and
indeed to problems of arbitrary space dimension.  We show that the method
converges robustly with respect to mesh size and spline degree, and that one
iteration is asymptotically not more expensive than an application of the
stiffness matrix.  The result is a quasi-optimal solution method for problems
of arbitrary space dimensions.

It appears that the stable splitting of the tensor product spline space
presented in Section~\ref{sec:splitting} is an interesting theoretical result
in its own right. It may have future applications to other aspects of IgA
beyond the one presented here.

The remainder of the paper is organized as follows.
In Section~\ref{sec:preliminaries}, we introduce the needed spline spaces and
present an isogeometric model problem. We also present an algorithmic multigrid
framework and an abstract convergence result which forms the basis of our later
analysis.
In Section~\ref{sec:splitting}, we derive the main new theoretical result used
in our construction: the $L_2$-orthogonal and $H^1$-stable splitting of the
spline space into a large, regular interior part and smaller spaces which
capture boundary effects.
In Section~\ref{sec:construction}, we use this space splitting to construct a
multigrid smoother based on the idea of additive subspace correction and show
that it results in a robust solver.
In Section~\ref{sec:realization}, we present details on the computational
realization of the proposed smoother and show that it permits an efficient
implementation in arbitrary space dimensions.
In Section~\ref{sec:experiments}, we present numerical experiments which
demonstrate the performance of the proposed method in practice.

%%%%%%%%%%%%%%%%%%%%%%%%%%%%%%%%%%%%%%%%%%%%%%%%%%%%%%%%%%%%%%%%%%%%%%%%%%%%%%%%

\section{Preliminaries}
\label{sec:preliminaries}

\subsection{Spline spaces and B-splines}
\label{sec:splines}

Consider a subdivision of the interval $(0,1)$ into $m \in \mathbb N$
intervals of length $h = 1/m$.
We introduce the spline space of degree $p \in \mathbb N$ with
maximum smoothness,
\[
    S := \{ u \in C^{p-1}(0,1) : u|_{( (j-1)h, jh )} \in \mathcal P^p
        \quad \forall j=1,\ldots,m \},
\]
where $C^{p-1}(0,1)$ is the space of all $p-1$ times continuously differentiable functions
on $(0,1)$ and $\mathcal P^p$ is the space of all polynomials of degree at most $p$.
We have
$
    n := \dim S = m + p
$.
As a basis for $S$, we use the normalized (i.e., satisfying a partition of
unity; cf.~\cite{DeBoor:Practical}) B-splines with an open knot vector.  In
higher dimensions $d > 1$, we introduce the space of tensor product splines
(cf.~\cite{DeBoor:Practical})
\[
    S^{d} := S \otimes \ldots \otimes S
\]
defined over $(0,1)^d$ with $\dim S^d = n^d$
and the corresponding tensor product B-spline basis.
For notational convenience, we assume that the same spline space $S$ is used
in each of the $d$ coordinate directions. Both our construction and our
analysis are however straightforward to generalize to the case where different
spline spaces are used in different coordinate directions.

\subsection{Isogeometric model problem}
\label{sec:model-problem}

Let $\Omega = (0,1)^d$ with $d \in \mathbb N$.
As a model problem, we consider a pure Neumann boundary value problem
for the PDE $-\Delta u + u = f$.
The variational formulation reads:
find $u \in H^1(\Omega)$ such that
\begin{equation}
    \label{eq:vf}
    a(u,v) = \sprod f v    \qquad   \forall v \in H^1(\Omega),
\end{equation}
where
\begin{equation}
    \label{eq:bilinear}
    a(u,v) = \int_\Omega (\nabla u \cdot \nabla v  + u v ) \,dx
    \qquad
    \forall u,v \in H^1(\Omega)
\end{equation}
and $f$ is a linear functional on $H^1(\Omega)$.
We will sometimes refer to the operator
$A: H^1(\Omega) \to H^1(\Omega)'$ given by $A v = a(v, \cdot)$,
where $H^1(\Omega)'$ denotes the continuous dual.
Note that $\norm{v}_A^2 = a(v,v) = \norm{v}_{H^1(\Omega)}^2$.

Discretizing \eqref{eq:vf} using tensor product splines, we
seek $u_h \in S^{d}$ such that
\begin{equation}
    \label{eq:discr-vf}
    a(u_h,v_h) = \sprod f {v_h}    \qquad   \forall v_h \in S^{d}.
\end{equation}
We are interested in robust and efficient iterative solvers for the
discrete problem \eqref{eq:discr-vf}. Here, by ``robust'' we mean
that the number of iterations to solve the problem should stay uniformly
bounded with respect to both the mesh size $h$ and the spline degree $p$,
and by ``efficient'' we mean that one iteration of the method should
not be asymptotically more expensive than computing the product of the stiffness
matrix with a vector. Combined, these properties allow us to solve \eqref{eq:discr-vf}
in quasi-optimal time.

In IgA, one introduces a bijective geometry map
from $\Omega$ to the actual domain of interest in order to be able to treat
more complicated computational domains.
Basis functions on the transformed domain are defined by composing the basis
functions on the reference domain with the inverse of the geometry map.
Furthermore, one is often interested in more general PDEs with varying and
possibly matrix-valued coefficients.
Discretizations for such more general problems can be preconditioned with a
solver for the model problem \eqref{eq:discr-vf}, and the resulting
condition number depends only on the geometry map and the coefficient
functions, but not on discretization parameters like the mesh size $h$ or the
spline degree $p$.  This principle has been widely used in the literature on
IgA solvers (see, e.g., \cite{Donatelli:2014b,HTZ:2016}) and formalized
in~\cite{Sangalli:2016}.
Therefore, a robust and efficient solver for the model problem
\eqref{eq:discr-vf} immediately yields robust and efficient solvers for a more
general class of problems with ``benign'' geometry maps and mildly varying
coefficients. This justifies the study of solvers for the model problem.

Three different refinement strategies for IgA discretizations were proposed in
\cite{Hughes:2005}: $h$-refinement (reducing the mesh size), $p$-refinement
(increasing the spline degree), and the so-called $k$-refinement.  The latter
is unique to IgA and maintains the maximum possible smoothness $C^{p-1}$ for
the spline space of degree $p$.  Already in \cite{Hughes:2005}, the favorable
performance of $k$-refinement was observed, and it appears to be the most
popular refinement strategy in the wider IgA literature.  This motivates the
study of solvers for spline spaces with maximum smoothness.

\subsection{A multigrid method framework}
\label{sec:mg}

Given a discretization space $V$ and a coarse space $V_c \subset V$,
we denote by $P: V_c \to V$ the canonical embedding.
Let $A: V \to V'$ denote the operator in a (discretized) equation
\[
    Au = f
\]
to be solved for $u \in V$.
The corresponding coarse-space operator is given by $A_c := P' A P$.
Furthermore, we assume that we are given a self-adjoint and positive definite
smoothing operator $L: V \to V'$.

Given a previous iterate $u^{(k)}$, we let $u^{(k,0)} := u^{(k)}$ and perform
$\nu \in \mathbb N$ \textit{smoothing steps} given by
\[
    u^{(k,j)} := u^{(k,j-1)} +  \tau L^{-1} (f - A u^{(k,j-1)}),
    \qquad
    j = 1, \ldots, \nu,
\]
where $\tau > 0$ is a damping parameter.
Then, we perform one \textit{coarse-grid correction step} given by
\[
    u^{(k+1)} := u^{(k,\nu)} + P A_c^{-1} P' (f - A u^{(k,\nu)}).
\]
Together, these updates describe one iteration $u^{(k)} \mapsto u^{(k+1)}$ of
a \textit{two-grid method}.
Given an entire sequence of nested spaces $V_0 \subset \ldots \subset V_L = V$, we
can replace the exact inversion of $A_c$ in the coarse-grid correction step
by one or two recursive applications of the two-grid method on the next coarser
level $V_{L-1}$, and so on until we reach the coarsest level $V_0$, where an
exact solver is used.
Using one or two recursive iteration steps results in the
\textit{V-cycle} or the \textit{W-cycle multigrid method}, respectively.

The following theorem is an abstract convergence result for the two-grid method
with the abovementioned smoother.
Its proof is given in \cite[Theorem~3]{HTZ:2016} and is based on
a variant of the standard multigrid theory as developed by Hackbusch~\cite{Hackbusch:1985}.
In~\cite[Theorem~4]{HTZ:2016},
it was shown that under the same assumptions also a W-cycle multigrid method converges.

\begin{theorem}[\cite{HTZ:2016}]
    \label{thm:twogrid}
    Assume that there are constants $C_A$ and $C_I$ such that the inverse inequality
    \begin{equation}\label{eq:sp}
        \norm{u}_{A}^2 \le C_I \norm{u}_{L}^2
        \qquad
        \forall u \in V
    \end{equation}
    and the approximation property for the $A$-orthogonal projector
    $T_c : V \to V_c$
    \begin{equation}\label{eq:ap}
        \norm{(I-T_c) u}_L^2 \le C_A \norm{u}_A^2
        \qquad
        \forall u \in V
    \end{equation}
    hold.
    Then the two-grid method converges for any choice of the damping parameter $\tau\in(0, C_I^{-1}]$ and
    any number of smoothing steps $\nu > \nu_0 := \tau^{-1}C_A$ with rate $q = \nu_0/\nu<1$.
\end{theorem}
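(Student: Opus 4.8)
The plan is to bound the (asymptotic) convergence rate of the two-grid error propagation operator by $q$, working throughout in the energy inner product $\sprod{\cdot}{\cdot}_A := a(\cdot,\cdot)$, with respect to which $\norm{\cdot}_A$ is the induced norm. Tracking the error $u-u^{(k)}$ through the $\nu$ smoothing steps and the coarse-grid correction, one iteration maps it by
\[
  E \;=\; (I - PA_c^{-1}P'A)\,S^\nu \;=\; (I - T_c)\,S^\nu,
  \qquad S := I - \tau L^{-1}A,
\]
because $PA_c^{-1}P'A$ is precisely the $A$-orthogonal projector $T_c$ onto (the image of) $V_c$. First I would record the spectral setup of the smoother: by the inverse inequality \eqref{eq:sp}, $M := L^{-1}A$ is self-adjoint and positive definite in $\sprod{\cdot}{\cdot}_A$ with $A$-spectrum contained in $(0,C_I]$; hence for $\tau\in(0,C_I^{-1}]$ the operator $S = I - \tau M$ is $A$-self-adjoint and positive semidefinite with $\norm{S}_A\le 1$, so functional calculus for $S$ (in particular its $A$-self-adjoint positive semidefinite power $S^{\nu/2}$) is at our disposal.

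The first main ingredient is an approximation estimate: for every $w\in V$,
\[
  \norm{(I-T_c)w}_A \;\le\; \sqrt{C_A}\;\norm{Aw}_{L^{-1}},
  \qquad \norm{g}_{L^{-1}}^2 := \sprod{g}{L^{-1}g}.
\]
I would prove this by using that $I-T_c$ is the $A$-orthogonal projector onto the $A$-orthogonal complement $V_c^{\perp_A}$, so $\norm{(I-T_c)w}_A = \sup\{\sprod{w}{\phi}_A : \phi\in V_c^{\perp_A},\ \norm{\phi}_A=1\}$; any such $\phi$ satisfies $\phi = (I-T_c)\phi$, so $\norm{\phi}_L\le\sqrt{C_A}$ by the approximation property \eqref{eq:ap}, and a Cauchy--Schwarz step gives $\sprod{w}{\phi}_A = \sprod{Aw}{\phi}\le\norm{Aw}_{L^{-1}}\norm{\phi}_L\le\sqrt{C_A}\norm{Aw}_{L^{-1}}$. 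The second ingredient is a smoothing estimate: for every $u\in V$,
\[
  \norm{AS^{\nu/2}u}_{L^{-1}}^2 = \sprod{u}{MS^\nu u}_A
  \;\le\; \Bigl(\max_{\lambda\in(0,C_I]}\lambda(1-\tau\lambda)^\nu\Bigr)\norm{u}_A^2
  \;\le\; \frac{1}{\tau\nu}\,\norm{u}_A^2,
\]
which follows from functional calculus for the $A$-self-adjoint operator $MS^\nu$ (whose $A$-eigenvalues are the numbers $\lambda(1-\tau\lambda)^\nu$ with $\lambda$ an $A$-eigenvalue of $M$), together with the elementary fact that $t\mapsto t(1-t)^\nu$ attains its maximum over $(0,1]$ at $t=1/(\nu+1)$ with value strictly below $1/(\nu+1)$, and the bound $(1-s)^\nu\le e^{-\nu s}$ to handle the range of $\tau$ for which this critical point falls outside $(0,\tau C_I]$.

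Finally I would combine the two ingredients through a symmetrization. Set $\widehat E := S^{\nu/2}(I-T_c)S^{\nu/2}$; it is $A$-self-adjoint and positive semidefinite with $\norm{\widehat E}_A = \sup_{u\ne 0}\norm{(I-T_c)S^{\nu/2}u}_A^2/\norm{u}_A^2$, and --- since $S$ is invertible for $\tau<C_I^{-1}$, the boundary case being recovered by continuity --- it is similar to $E$ via $\widehat E = S^{\nu/2}E\,S^{-\nu/2}$, so $\rho(E) = \rho(\widehat E) = \norm{\widehat E}_A$. Applying the approximation estimate with $w = S^{\nu/2}u$ and then the smoothing estimate yields
\[
  \norm{(I-T_c)S^{\nu/2}u}_A^2 \le C_A\,\norm{AS^{\nu/2}u}_{L^{-1}}^2 \le \frac{C_A}{\tau\nu}\,\norm{u}_A^2,
\]
hence $\rho(E)\le C_A/(\tau\nu) = \nu_0/\nu = q$, which is $<1$ exactly when $\nu>\nu_0$; this is the asserted two-grid convergence with rate $q$. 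The conceptual core --- and the step I expect to be the main obstacle --- is the approximation estimate, where one must convert the ``weak-norm'' bound \eqref{eq:ap} into something usable by exploiting the precise structure of the $A$-orthogonal projector; the secondary subtlety is organizing the combination (via the symmetrization $\widehat E$, i.e.\ splitting the smoothing symmetrically around the coarse correction as in the classical Hackbusch two-grid analysis) so that the rate comes out \emph{linear} in $1/\nu$ rather than as a square root, while the scalar estimate for $\max_\lambda\lambda(1-\tau\lambda)^\nu$ is routine.
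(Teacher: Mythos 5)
Your argument is correct and is essentially the Hackbusch-style two-grid proof that the paper attributes (via \cite{HTZ:2016}) to this theorem: convert the approximation property \eqref{eq:ap} into the dual estimate $\norm{(I-T_c)w}_A \le \sqrt{C_A}\,\norm{Aw}_{L^{-1}}$, pair it with the spectral smoothing estimate $\max_{\lambda\in(0,C_I]}\lambda(1-\tau\lambda)^\nu \le (\tau\nu)^{-1}$ obtainable from \eqref{eq:sp}, and symmetrize to get $\rho(E)\le C_A/(\tau\nu)=q$. One small simplification: the invertibility of $S$ and the continuity argument at $\tau=C_I^{-1}$ are unnecessary, since $\rho\bigl((I-T_c)S^{\nu/2}\cdot S^{\nu/2}\bigr)=\rho\bigl(S^{\nu/2}(I-T_c)S^{\nu/2}\bigr)$ follows directly from the general identity $\rho(AB)=\rho(BA)$, which holds without any invertibility assumption.
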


In particular, if $C_A$ and $C_I$ do not depend on
the mesh size $h$ and the spline degree $p$, then the two-grid method converges
with a rate $q < 1$ which does not depend on $h$ and $p$.
In other words, the two-grid method is then robust.

In addition to properties \eqref{eq:sp} and \eqref{eq:ap}, care must be taken
that the smoother can be realized efficiently. In other words, it should be
possible to apply the inverse $L^{-1}$ with a computational cost which is
roughly comparable to that for applying $A$.

%%%%%%%%%%%%%%%%%%%%%%%%%%%%%%%%%%%%%%%%%%%%%%%%%%%%%%%%%%%%%%%%%%%%%%%%%%%%%%%%

\section{Stable splittings of spline spaces}
\label{sec:splitting}

Consider first the univariate case, $d=1$, with $\Omega=(0,1)$.
In \cite{Takacs:Takacs:2015}, the subspace
\[
    S_0 := \left\{ u \in S: u^{(2l+1)}(0) = u^{(2l+1)}(1) = 0
      \quad \forall l \in \mathbb{N}_0 \text{ with } 2l+1 < p
    \right\}
\]
of splines with vanishing odd derivatives of order less than $p$ at the boundaries
was introduced (denoted in~\cite{Takacs:Takacs:2015} by $\widetilde S_{p,h}(\Omega)$).
It is a large subspace of $S$ in the sense that
$
    \dim S_0 \ge \dim S - p.
$

The subspace $S_0$ has the very desirable property of satisfying
both a (first-order) approximation property and an inverse inequality, both
with constants which are independent of the spline degree $p$.
To formulate these results, let $Q_0: L_2(\Omega) \rightarrow S_0$ denote the
$L_2$-orthogonal projector into $S_0$, and let $\Pi_0: H^1(\Omega) \rightarrow S_0$
denote the projector into $S_0$ which is orthogonal with respect to the scalar product
\[
   (u,v)_{H^1_\circ(\Omega)} := (\nabla u,\nabla v)_{L_2(\Omega)}
    + \frac{1}{|\Omega|} \left(\int_\Omega u(x) dx\right)\left(\int_\Omega v(x) dx\right).
\]

We abbreviate the $L_2(\Omega)$-norm by $\norm{\cdot}_0$,
and the full $H^1(\Omega)$-norm and the seminorm
by $\norm{\cdot}_1$ and $|\cdot|_1$, respectively.
Furthermore, we write $c$ for a generic positive constant which does not depend
on the mesh size $h$ or the spline degree $p$.

\begin{theorem}[{\cite[Theorem~6.1]{Takacs:Takacs:2015}}]
    \label{thm:inverse}
    For any spline degree $p\in \mathbb{N}$,
    we have the inverse inequality
    \[
        |u|_{1} \le 2 \sqrt{3} h^{-1} \|u\|_{0}
        \qquad \forall u \in S_0.
    \]
\end{theorem}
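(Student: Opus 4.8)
The inverse inequality is stated for the special subspace $S_0$ of splines whose odd derivatives up to order $<p$ vanish at both endpoints. The natural strategy is to exploit this boundary condition to reduce the claim to a periodic (or at least symmetric) setting where sharp constants are known. Concretely, I would first recall the classical Markov-type inverse inequality for smooth splines: on a uniform mesh, $|u|_1 \le C h^{-1}\|u\|_0$ holds for \emph{all} splines in $S$, but with a constant $C$ that \emph{degrades} like $\sqrt{p}$ (or worse) as $p\to\infty$. The whole point of restricting to $S_0$ is to kill this degradation, and the vanishing odd derivatives are exactly the data one needs to do so.

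First I would set up an eigenvalue problem: the best constant in $|u|_1 \le C h^{-1}\|u\|_0$ on $S_0$ is $h^{-1}$ times the square root of the largest generalized eigenvalue $\lambda$ of $(\nabla u,\nabla v)_{L_2} = \lambda (u,v)_{L_2}$ over $u,v \in S_0$. I would integrate by parts in the stiffness form: $(\nabla u,\nabla v)_{L_2} = [u' v]_0^1 - (u'',v)_{L_2}$. The boundary term $[u'v]_0^1$ is \emph{not} zero in general — $u'(0)$ need not vanish — so the key algebraic observation must be that the relevant high-order boundary terms cancel. The trick is to iterate integration by parts and use that $u^{(2l+1)}(0)=u^{(2l+1)}(1)=0$ for all odd orders below $p$; on each element $u$ is a polynomial of degree $p$, so $u^{(p+1)}\equiv 0$ on each piece and the argument terminates. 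This is the step I expect to be the main obstacle: one has to organize the repeated integration by parts (or an equivalent Fourier/cosine-series argument) so that every surviving boundary contribution has the right sign, or vanishes, leaving a clean bound.

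An alternative and probably cleaner route, which I would pursue in parallel, is reflection. The condition ``odd derivatives vanish at $0$ and $1$'' is precisely the compatibility condition making the even extension of $u$ across $x=0$ and $x=1$ a $C^{p-1}$ spline on a doubled (then periodified) mesh. So I would reflect $u\in S_0$ to a function $\tilde u$ on $(-1,1)$, then extend $2$-periodically; $\tilde u$ is a smooth periodic spline of degree $p$ on a uniform mesh with $2m$ elements. On periodic uniform spline spaces the inverse inequality constant \emph{is} $p$-robust — this is essentially a Fourier-symbol computation, since the stiffness and mass matrices are circulant and one compares the symbols $\widehat{K}(\theta)/\widehat{M}(\theta)$, whose supremum over $\theta$ is bounded independently of $p$ (the relevant ratio is controlled by $12 h^{-2}$, giving the constant $2\sqrt 3$). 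Reflecting back, $|u|_{1,(0,1)}^2 = \tfrac12 |\tilde u|_{1,(-1,1)}^2$ and likewise for the $L_2$ norm, so the constant is preserved.

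To finish, I would (i) verify the reflection is well-defined on $S_0$ and lands in the periodic spline space of degree $p$ over $2m$ elements — this uses the vanishing-odd-derivative condition essentially; (ii) establish the $p$-robust bound $\widehat K(\theta)\le 12 h^{-2}\,\widehat M(\theta)$ for all $\theta$, or equivalently the inverse inequality on periodic splines, via the standard symbol estimate (or cite it if available in \cite{Takacs:Takacs:2015} or the de Boor literature); and (iii) undo the reflection to transfer the estimate back to $S_0$, tracking that the factor $2\sqrt3$ emerges from $\sqrt{12}$. The sharpness and the precise constant $2\sqrt3$ would come from examining the symbol ratio at its maximizing frequency $\theta=\pi$.
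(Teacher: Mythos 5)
First, note that the paper does not prove this statement at all: Theorem~\ref{thm:inverse} is imported verbatim from \cite[Theorem~6.1]{Takacs:Takacs:2015}, so there is no in-paper argument to compare against; your proposal has to be judged as a self-contained proof attempt. Your reflection idea is sound and is the right reduction: the even extension of $u\in S_0$ across $x=0$ and $x=1$ matches derivatives of order $k\le p-1$ at the (knot) point $0$ precisely because $(-1)^k u^{(k)}(0)=u^{(k)}(0)$ holds automatically for even $k$ and is forced by the vanishing odd derivatives for odd $k<p$; the extension therefore lands in the $C^{p-1}$, degree-$p$ periodic spline space on a uniform mesh of $2m$ elements, and both $|\cdot|_1^2$ and $\|\cdot\|_0^2$ pick up the same factor $2$, so the constant transfers exactly.

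The genuine gap is that you then assert, rather than prove, the one statement that carries all of the mathematical content: the $p$-uniform bound $\widehat K(\theta)\le 12\,h^{-2}\,\widehat M(\theta)$ for periodic splines. This is not a ``standard symbol estimate'' in the sense of being free: the symbols are Euler--Frobenius-type functions, and one must show that the ratio is maximized over all $p\ge 1$ at $p=1$, $\theta=\pi$ (where it equals $12h^{-2}$), which is a nontrivial monotonicity statement in $p$. Writing ``or cite it if available in \cite{Takacs:Takacs:2015}'' is circular, since that is exactly the result being proved. There is, however, an elementary way to close the gap entirely within your periodic reduction, avoiding symbols altogether: let $C_p$ denote the best constant in $\|v'\|_0\le C_p h^{-1}\|v\|_0$ over degree-$p$ periodic splines. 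For $p\ge 2$, integration by parts on the torus gives $\|v'\|_0^2=-(v'',v)\le\|v''\|_0\,\|v\|_0$, and since $v'$ is itself a periodic spline of degree $p-1$ of maximal smoothness, the induction hypothesis yields $\|v''\|_0\le C_{p-1}h^{-1}\|v'\|_0$; combining and cancelling $\|v'\|_0$ gives $C_p\le C_{p-1}$. The base case $p=1$ is the classical piecewise-linear eigenvalue computation, $C_1=\sqrt{12}=2\sqrt3$. With that induction supplied (and your reflection step), the proof is complete; without it, the argument merely relocates the difficulty from $S_0$ to the periodic space. Your first, non-periodic route (iterated integration by parts on $(0,1)$) is essentially what the cited reference does, but there the boundary terms force a simultaneous induction over the companion space with vanishing \emph{even} derivatives, which your sketch does not set up.
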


\begin{theorem}[{\cite[Corollary~5.1]{Takacs:Takacs:2015}, \cite[Theorem~14]{HTZ:2016}}]
    \label{thm:approx}
    For any spline degree $p\in \mathbb{N}$ and any $u \in H^1(\Omega)$,
    we have the approximation error estimates
    \[
        \norm{(I - Q_0)   u}_{0} \le   \sqrt{2} h |u|_{1}
        \qquad\mbox{and}\qquad
        \norm{(I - \Pi_0) u}_{0} \le   \sqrt{2} h |u|_{1}.
    \]
\end{theorem}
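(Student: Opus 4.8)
The plan is to reduce the two univariate estimates to a single statement about \emph{periodic} splines, where the translation invariance of the basis makes the $L_2$- and $H^1_\circ$-projections amenable to Fourier analysis; the quantitative spline-approximation estimate that this produces is the real content.

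\textbf{Step 1 (reduction by reflection).} The defining conditions of $S_0$ — vanishing of all odd-order derivatives of order less than $p$ at $0$ and $1$ — are precisely the compatibility conditions that make the even reflection of $u\in S_0$ across $x=0$, extended $2$-periodically (equivalently, reflected once more across $x=1$), a globally $C^{p-1}$ piecewise polynomial, i.e.\ a periodic spline of degree $p$ on the circle $\mathbb{R}/2\mathbb{Z}$ subdivided into $2m$ elements. Call this periodic space $S^{\mathrm{per}}$ and let $R$ denote the reflection $v\mapsto v(-\,\cdot\,)$. Even reflection sends $u$ to a function $w$ with $\norm{w}_0=\sqrt2\,\norm{u}_0$ and $|w|_1=\sqrt2\,|u|_1$, maps $H^1(0,1)$ onto the $R$-invariant subspace of $H^1(\mathbb{R}/2\mathbb{Z})$, and maps $S_0$ onto the $R$-invariant part of $S^{\mathrm{per}}$. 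Since $R$ is an $L_2$-isometry that preserves $|\cdot|_1$ and the mean value and maps $S^{\mathrm{per}}$ onto itself, the $L_2$-orthogonal projector $Q^{\mathrm{per}}$ and the $H^1_\circ$-orthogonal projector $\Pi^{\mathrm{per}}$ onto $S^{\mathrm{per}}$ both commute with $R$; hence they preserve $R$-invariance, and under the reflection they correspond precisely to $Q_0$ and $\Pi_0$. It therefore suffices to prove $\norm{(I-Q^{\mathrm{per}})w}_0\le\sqrt2\,h\,|w|_1$ and the analogous bound for $\Pi^{\mathrm{per}}$ for all $w\in H^1(\mathbb{R}/2\mathbb{Z})$ (where $h=1/m$ is the unchanged element size).

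\textbf{Step 2 (Fourier analysis of the periodic projection).} Expand in the orthonormal basis $\chi_k(x)=2^{-1/2}e^{\pi ikx}$, $k\in\mathbb{Z}$, so that $|w|_1^2=\sum_k(\pi k)^2|\widehat w_k|^2$ and $\norm{w}_{H^1_\circ}^2=\sum_k\sigma_k^2|\widehat w_k|^2$ with $\sigma_k^2=(\pi k)^2$ for $k\ne0$ and $\sigma_0^2=1$. Because $S^{\mathrm{per}}$ is spanned by integer translates of a single periodized cardinal B-spline, its mass and stiffness operators are circulant and are simultaneously diagonalised by the discrete Fourier transform: grouping the modes into the $2m$ residue classes modulo $2m$ (``towers''), $L_2(\mathbb{R}/2\mathbb{Z})$ splits into the corresponding orthogonal subspaces, $S^{\mathrm{per}}$ meets each tower in the one-dimensional span of $v_r:=\sum_{k\equiv r}c_k\chi_k$ with $c_k=\widehat B(k)$, and both projectors act tower by tower. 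The structural facts I would use about the symbol are: (i) $|c_k|$ equals a fixed constant times $|\operatorname{sinc}(\pi k/(2m))|^{p+1}$, so $c_k=0$ whenever $2m\mid k$, $k\ne0$; hence every tower has a dominant mode $k_0$ with $|k_0|\le m$ and $c_{k_0}\ne0$, the constant tower ($r=0$) is spanned by the constant alone, and every other mode of a tower satisfies $|k|\ge m$, i.e.\ $h^2(\pi k)^2\ge\pi^2$; and (ii) $|c_k/c_{k_0}|$ decays like a $(p{+}1)$-th power across the tower. For $Q^{\mathrm{per}}$ I would then bound, in each tower, the distance of $w_r$ to $\mathbb{C}v_r$ from above by $\norm{w_r-\beta v_r}_0$ with $\beta=\widehat w_{k_0}/c_{k_0}$ chosen to annihilate the dominant mode: the contribution of the non-dominant modes of $w$ is absorbed by $h^2$ times their weight in $|w|_1^2$ thanks to (i), while the aliasing contribution $|\widehat w_{k_0}|^2\sum_{k\ne k_0}|c_k/c_{k_0}|^2$ is bounded by a constant times $h^2\sigma_{k_0}^2|\widehat w_{k_0}|^2$ using (ii), which is the classical ``approximation order $p{+}1$'' estimate for translates of the cardinal B-spline. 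Summing over towers and tracking the constants yields the factor $\sqrt2$; note that, because the constant tower is reproduced exactly, only the seminorm $|w|_1$ (hence $|u|_1$) appears on the right-hand side.

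\textbf{Step 3 (the $H^1_\circ$-projector and the main obstacle).} For $\Pi^{\mathrm{per}}$ the tower-wise computation is the same, except that the coefficient in the $v_r$-direction is now optimal in the $H^1_\circ$- rather than the $L_2$-inner product; it differs from the $L_2$-optimal $\beta$ by a quantity that one estimates with the same ingredients — here the weights $\sigma_k^2$ only reinforce the decay in (ii), since the dominant mode carries the smallest frequency in its tower — so that the $L_2$-norm of the $\Pi^{\mathrm{per}}$-error is again controlled by the same expressions and produces $\sqrt2$. The genuinely delicate point, and the place where the maximum-smoothness structure of $S_0$ is essential, is fact (ii): turning the $\operatorname{sinc}^{\,p+1}$-decay of the B-spline symbol into an aliasing bound that is uniform in the degree $p$ while still being tight enough to keep the overall constant at $\sqrt2$. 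This quantitative estimate is exactly the technical core established in \cite{Takacs:Takacs:2015}, from which Theorem~\ref{thm:approx} follows.
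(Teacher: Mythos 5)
The statement is quoted in the paper from \cite{Takacs:Takacs:2015} and \cite{HTZ:2016} without an in-paper proof, so there is no argument within this paper to compare against. Evaluating your sketch on its own terms: Step~1 (even reflection to a $2$-periodic maximally smooth spline space, and commutation of $Q^{\mathrm{per}}$ and $\Pi^{\mathrm{per}}$ with the reflection operator $R$) is correct and is indeed the reduction that the definition of $S_0$ via vanishing odd boundary derivatives is designed to enable; that part is sound.

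The genuine gap is in Steps~2 and~3. You correctly identify the structure --- symbol $\propto\operatorname{sinc}^{p+1}$, tower decomposition, vanishing of the symbol at nonzero multiples of $2m$, exact reproduction of the constant tower --- but the quantitative heart of the matter, namely that the aliasing sum $\sum_{k\ne k_0}|c_k/c_{k_0}|^2$ admits a bound that is \emph{uniform in $p$} and tight enough to produce exactly $\sqrt2\,h$, is precisely what you write off in the closing sentence as ``the technical core established in \cite{Takacs:Takacs:2015}.'' That is circular: the $p$-uniformity with the sharp constant \emph{is} this theorem, not a known ingredient you may invoke, and absent that estimate your outline reduces to the classical, $p$-dependent order-$(p{+}1)$ spline approximation bound. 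Two further issues. In Step~3 you never actually estimate how far the coefficient chosen tower-wise by $\Pi^{\mathrm{per}}$ deviates from the $L_2$-optimal one; since $\Pi^{\mathrm{per}}$ minimizes the $H^1_\circ$-distance rather than the $L_2$-distance, one has $\norm{(I-\Pi^{\mathrm{per}})w}_0\ge\norm{(I-Q^{\mathrm{per}})w}_0$, so the $\Pi_0$-bound is the \emph{stronger} of the two and needs its own argument; the heuristic ``the weights only reinforce the decay'' does not supply one. Finally, you could have halved the work: because $Q_0$ is the $L_2$-best approximation in $S_0$, the inequality $\norm{(I-Q_0)u}_0\le\norm{(I-\Pi_0)u}_0$ holds trivially, so only the $\Pi_0$-estimate needs to be proved; carrying out the Fourier computation for $Q^{\mathrm{per}}$ first and then retrofitting it for $\Pi^{\mathrm{per}}$ is backwards.
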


Contrast these properties with the entire spline space $S$, which does satisfy
a robust approximation property, but whose inverse inequality deteriorates with
increasing degree $p$ (\cite{Takacs:Takacs:2015}).  On the other hand, a
smaller space of only ``interior'' splines, built by discarding the $p$
leftmost and $p$ rightmost B-splines, does satisfy a robust inverse inequality
but loses the approximation property.

We remark that the non-robustness of the inverse inequality in $S$
is the root cause of the spectral ``outliers'' commonly observed when
solving eigenvalue problems using IgA (cf.~\cite{Cottrell:2006}).
No such outliers appear in the space $S_0$.

\subsection{A stable splitting in one dimension}
\label{sec:splitting-1d}

Let $S_1 := S_0^{\bot_{L_2}}$ denote the $L_2$-orthogonal complement of $S_0$
in $S$.  Consider the splitting of $S$ into the direct sum
\[
    S = S_0 \oplus S_1
    \quad \longleftrightarrow \quad
    u = Q_0 u + (I - Q_0) u
\]
of $S_0$ and its complement, illustrated in Fig.~\ref{fig:splitting-1d}.
Due to orthogonality, we have
\begin{equation}\label{eq:l2:orth}
    \norm{u}_0^2 = \norm{Q_0 u}_0^2 + \norm{(I - Q_0) u}_0^2.
\end{equation}

\begin{figure}[htb]
    \centering
    \includegraphics[width=0.49\textwidth]{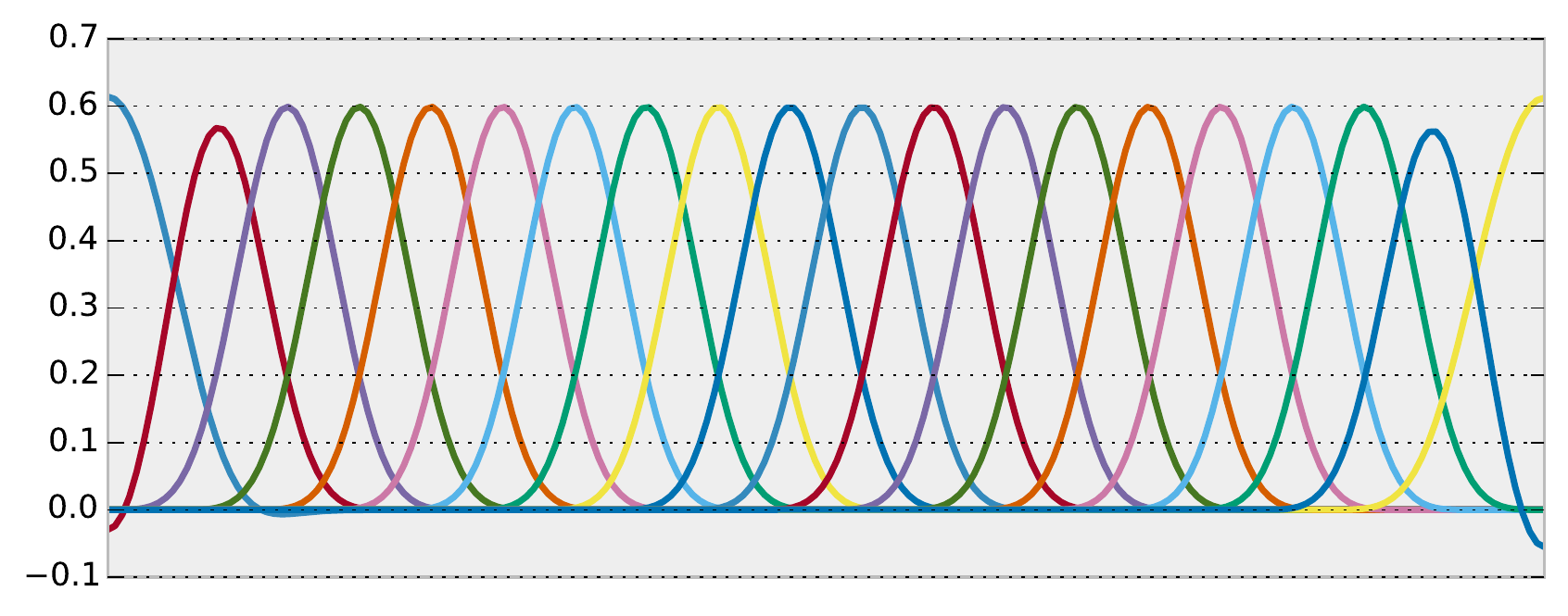}
    \includegraphics[width=0.49\textwidth]{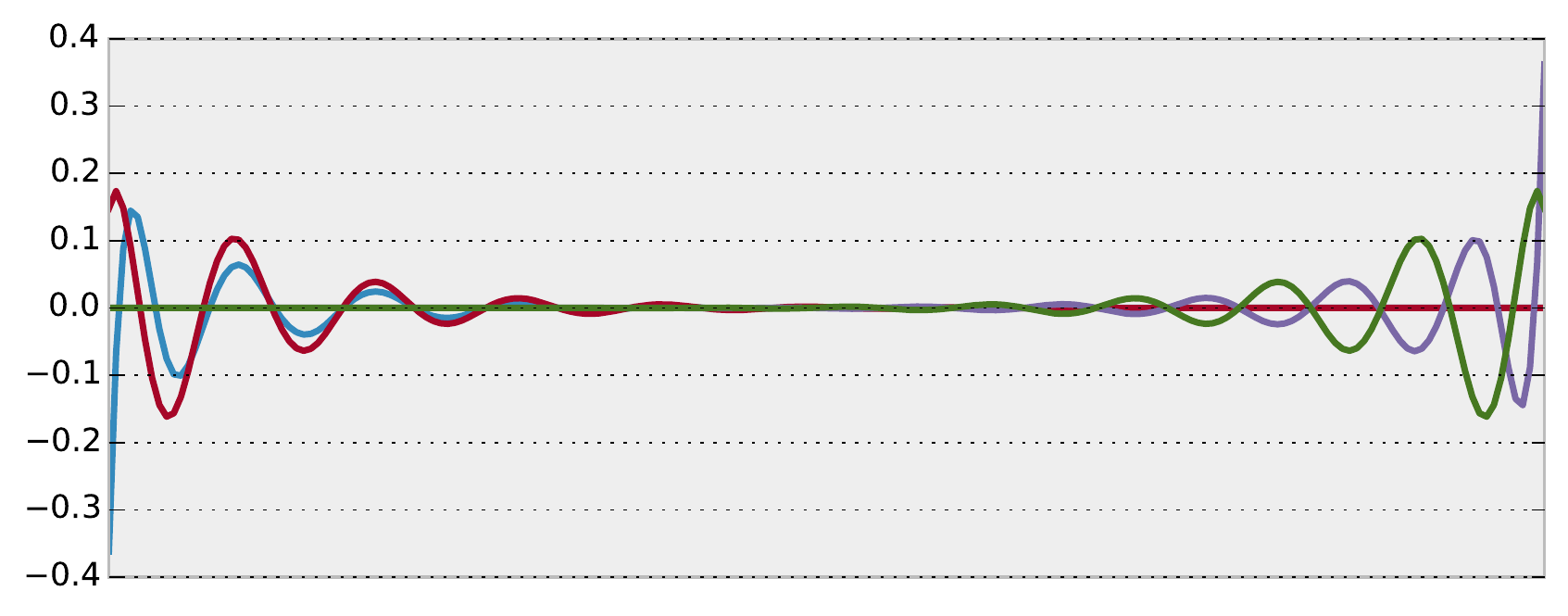}
    \caption{Bases for the space $S_0$ (left) and its orthogonal complement $S_1$ (right)
        for $p=4$, $h=1/20$. Here, $\dim S_0 = 20$ and $\dim S_1 = 4$.}
    \label{fig:splitting-1d}
\end{figure}

Crucially, we can prove that this splitting is stable also in the $H^1$-norm.
This is a direct result of the space $S_0$ satisfying both an approximation
property and an inverse inequality.

\begin{theorem}
    \label{thm:stable-1d}
    For any spline $u \in S$, we have
    \[
        c^{-1} |u|_1^2 \le
        |Q_0 u|_1^2 + |(I - Q_0) u|_1^2
        \le c |u|_1^2
    \]
    and the corresponding result for the full $H^1$-norm.
\end{theorem}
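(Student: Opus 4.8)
The plan is to reduce everything to a single robust estimate, namely the $H^1$-stability of the $L_2$-orthogonal projector $Q_0$ onto $S_0$, i.e.\ $|Q_0 u|_1 \le c\,|u|_1$. Granting this, the left inequality of the theorem is just the triangle inequality together with $(a+b)^2\le 2a^2+2b^2$ applied to $u = Q_0 u + (I-Q_0)u$, and the right inequality follows from $|(I-Q_0)u|_1 \le |u|_1 + |Q_0 u|_1 \le (1+c)|u|_1$ together with the stability bound itself (absorbing all numerical factors into the generic $c$). The assertion for the full $H^1$-norm is then obtained at once: by the $L_2$-orthogonality \eqref{eq:l2:orth} one has $\norm{Q_0 u}_1^2 + \norm{(I-Q_0)u}_1^2 = \bigl(|Q_0 u|_1^2 + |(I-Q_0)u|_1^2\bigr) + \norm{u}_0^2$, so the seminorm estimate carries over, with the zeroth-order term contributing $\norm{u}_0^2$ on both sides.

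It remains to prove the $H^1$-stability of $Q_0$. First I would reduce to the mean-zero case: constants belong to $S_0$ and are fixed by $Q_0$, and both $u \mapsto |Q_0 u|_1$ and $u \mapsto |u|_1$ are invariant under adding a constant to $u$, so we may assume $\int_\Omega u\,dx = 0$, whence $\norm{u}_{H^1_\circ(\Omega)} = |u|_1$. The key step is to bridge $Q_0$ with the $H^1_\circ$-orthogonal projector $\Pi_0$, which has the same range $S_0$; since $\Pi_0 u \in S_0$ we have $Q_0\Pi_0 u = \Pi_0 u$, hence
\[
    |Q_0 u|_1 \;\le\; |Q_0 u - \Pi_0 u|_1 + |\Pi_0 u|_1 \;=\; |Q_0(u-\Pi_0 u)|_1 + |\Pi_0 u|_1 .
\]
The second term is bounded by $\norm{\Pi_0 u}_{H^1_\circ(\Omega)} \le \norm{u}_{H^1_\circ(\Omega)} = |u|_1$, using that an orthogonal projector is a contraction in its own norm. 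For the first term, $Q_0(u-\Pi_0 u)\in S_0$, so the $p$-robust inverse inequality of Theorem~\ref{thm:inverse}, the fact that $Q_0$ is an $L_2$-contraction, and the $p$-robust approximation estimate for $\Pi_0$ from Theorem~\ref{thm:approx} yield
\[
    |Q_0(u-\Pi_0 u)|_1 \le 2\sqrt3\,h^{-1}\norm{Q_0(u-\Pi_0 u)}_0 \le 2\sqrt3\,h^{-1}\norm{(I-\Pi_0)u}_0 \le 2\sqrt6\,|u|_1 .
\]
Combining the two contributions gives $|Q_0 u|_1 \le (1+2\sqrt6)\,|u|_1$, and tracking constants through the steps above produces an explicit admissible value of $c$.

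I do not expect a genuinely hard step; the substance of the proof is the observation that the robust $H^1$-stability of $Q_0$ — on which everything hinges — is exactly what one gets by playing the $p$-robust inverse inequality in $S_0$ against the $p$-robust approximation property of $S_0$, with $\Pi_0$ used only as an auxiliary device. The one point requiring care is the reduction to mean-zero functions, needed so that the zeroth-order term in the $H^1_\circ$-inner product does not contaminate the bound $|\Pi_0 u|_1 \le |u|_1$; note also that one cannot shortcut the estimate by an inverse inequality on the complement $S_1$, since that space does \emph{not} admit a $p$-robust inverse inequality, which is precisely why the splitting rather than $S_1$ alone is the useful object.
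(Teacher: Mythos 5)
Your proof is correct and follows essentially the same route as the paper: bridge $Q_0$ to the auxiliary $H^1_\circ$-projector $\Pi_0$, kill the difference term with the robust inverse inequality in $S_0$ and the robust approximation estimate, and conclude via a triangle inequality plus the $L_2$-orthogonality identity. The only cosmetic difference is that you insert $Q_0 u - \Pi_0 u = Q_0(u-\Pi_0 u)$ and use $L_2$-contractivity of $Q_0$ so that only the $\Pi_0$ approximation estimate is needed (the paper instead splits $\|(\Pi_0-Q_0)u\|_0$ and invokes both approximation estimates), and you make explicit the mean-zero reduction justifying $|\Pi_0 u|_1\le|u|_1$, which the paper states without elaboration.
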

\begin{proof}
    The left inequality follows from the Cauchy-Schwarz inequality with $c=2$.
    For the right inequality, we observe that
    \[
        | Q_0 u |_1
        \le
        | \Pi_0 u |_1 + | (\Pi_0 - Q_0) u |_1
        \le
        | u |_1 + c h^{-1} \left( \norm{ (I - \Pi_0) u }_0 +  \norm{ (I - Q_0) u }_0 \right)
    \]
    because of the triangle inequality, the stability of the $H^1_\circ$-projector
    $\Pi_0$ in the $H^1$-seminorm and the robust inverse inequality in
    $S_0$ (Theorem~\ref{thm:inverse}).
    With the approximation error estimate (Theorem~\ref{thm:approx}) we obtain $H^1$-stability of the $L_2$-projector,
    \begin{equation}\label{eq:proof:thm:stable-1d}
        | Q_0 u |_1 \le c | u |_1.
    \end{equation}
    The desired result follows from~\eqref{eq:proof:thm:stable-1d} and
    \[
        |(I - Q_0) u|_1
        \le
        |u|_1 +
        |Q_0 u|_1
        \le
        (1+c) |u|_1.
    \]
    The result for the full $H^1$-norm follows by adding the identity \eqref{eq:l2:orth}.
\end{proof}

\subsection{A stable splitting in two dimensions}
\label{sec:splitting-2d}

The two-dimensional tensor product spline space is given by
$
    S^{2} = S \otimes S.
$
Since the tensor product distributes over direct sums, we obtain the splitting
\[
    S^{2}
    =
    (S_0 \otimes S_0) \oplus
    (S_0 \otimes S_1) \oplus
    (S_1 \otimes S_0) \oplus
    (S_1 \otimes S_1)
    =
    S_{00} \oplus S_{01} \oplus S_{10} \oplus S_{11}
\]
with the abbreviations
$
    S_{\alpha_1,\alpha_2} := S_{\alpha_1} \otimes S_{\alpha_2}
$
for
$
    \alpha_j \in \{0,1\}
$.
A visualization of this splitting is shown in Fig.~\ref{fig:splitting-2d}.
Note that the shaded regions do not correspond to the supports of the function
spaces; in fact, each of the subspaces has global support.  However, the shaded
regions roughly correspond to regions where the corresponding functions are
``largest'', and their areas roughly correspond to the space dimensions.  In
view of this, it makes sense to think of $S_{00}$ as an ``interior'' space, of
$S_{01}$ and $S_{10}$ as ``edge'' spaces, and of $S_{11}$ as a ``corner''
space.

\begin{figure}[htb]
    \centering
    \begin{minipage}[c]{0.2\textwidth} \centering
        \includegraphics[width=\textwidth]{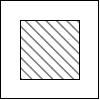} \\ $S_{00}$
    \end{minipage}
    \raisebox{1ex}{$\oplus$}
    \begin{minipage}[c]{0.2\textwidth} \centering
        \includegraphics[width=\textwidth]{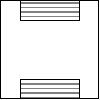} \\ $S_{01}$
    \end{minipage}
    \raisebox{1ex}{$\oplus$}
    \begin{minipage}[c]{0.2\textwidth} \centering
        \includegraphics[width=\textwidth]{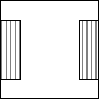} \\ $S_{10}$
    \end{minipage}
    \raisebox{1ex}{$\oplus$}
    \begin{minipage}[c]{0.2\textwidth} \centering
        \includegraphics[width=\textwidth]{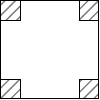} \\ $S_{11}$
    \end{minipage}
    \caption{Visualization of the splitting in 2D.}
    \label{fig:splitting-2d}
\end{figure}

Again, we can prove that the splitting is $H^1$-stable.
In the following, we let $M: S \to S'$, $K: S \to S'$
denote the operators in the univariate spline space associated with
the bilinear forms
\[
    \sprod{M u}{v} := \int_0^1 u(x) v(x) \, dx,
    \quad
    \sprod{K u}{v} := \int_0^1 u'(x) v'(x) \, dx
    \quad
    \forall u,v \in S,
\]
that is, the one-dimensional mass and stiffness operators, respectively.
For any $(\alpha_1,\alpha_2) \in \{0,1\}^2$,
we furthermore introduce the abbreviations
\begin{alignat*}{2}
    Q_1 &:= I - Q_0 : S \to S_1,
    \qquad
    & Q_{\alpha_1,\alpha_2} &:= Q_{\alpha_1} \otimes Q_{\alpha_2}: S^2 \to S_{\alpha_1,\alpha_2}
    \\
    K_{\alpha_j} &:= Q_{\alpha_j}' K Q_{\alpha_j}: S_{\alpha_j} \to S_{\alpha_j}',
    \qquad
    & M_{\alpha_j} &:= Q_{\alpha_j}' M Q_{\alpha_j}: S_{\alpha_j} \to S_{\alpha_j}'.
\end{alignat*}
As tensor products of $L_2(0,1)$-orthogonal projectors, the projectors
$Q_{\alpha_1,\alpha_2}$ are
$L_2(\Omega)$-orthogonal, as one easily verifies. Thus the splitting of $S^{2}$
given above is a direct sum of $L_2$-orthogonal subspaces, and we have
\begin{equation}
    \label{eq:ortho-2d}
    \norm{u}_0^2
    =
    \sum_{(\alpha_1,\alpha_2)}  \norm{ Q_{\alpha_1,\alpha_2} u }_0^2,
\end{equation}
where here and below sums over $(\alpha_1,\alpha_2)$ are taken to run over
the set $\{0,1\}^2$.

\begin{theorem}\label{thm:stable-2d}
    For any tensor product spline $u \in S^{2}$, we have
    \begin{equation*}
        c^{-1} |u|_1^2
        \le
        \sum_{(\alpha_1,\alpha_2)}  | Q_{\alpha_1,\alpha_2} u |_1^2
        \le
        c |u|_1^2,
    \end{equation*}
    and the corresponding result for the full $H^1$-norm.
\end{theorem}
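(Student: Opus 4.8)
The plan is to reduce the two-dimensional statement to the one-dimensional result (Theorem~\ref{thm:stable-1d}) by exploiting the tensor product structure, applying the univariate stability estimate once in each coordinate direction. The key observation is that the $H^1$-seminorm of a tensor product spline splits as $|u|_1^2 = \sprod{(K \otimes M) u}{u} + \sprod{(M \otimes K) u}{u}$, so it suffices to control each of these two terms separately under the action of the projectors $Q_{\alpha_1,\alpha_2}$.

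First I would treat the right-hand inequality. Fix a direction, say the first, and consider the term $\sprod{(K \otimes M) u}{u}$. Since $Q_0$ is $L_2$-orthogonal, $M = Q_0' M Q_0 + Q_1' M Q_1$ (the cross terms vanish), so this term splits as a sum over $\alpha_2 \in \{0,1\}$ of $\sprod{(K \otimes M_{\alpha_2})(I \otimes Q_{\alpha_2}) u}{(I \otimes Q_{\alpha_2}) u}$. For each fixed $\alpha_2$, I would apply the univariate $H^1$-seminorm stability of $Q_0$ in the first variable — this is exactly the content of~\eqref{eq:proof:thm:stable-1d}, which gives $|Q_0 v|_1 \le c|v|_1$, tensored against the (bounded, nonnegative) operator $M_{\alpha_2}$ in the second slot — to conclude that the $K \otimes M$ contribution of $Q_{\alpha_1,\alpha_2} u$ summed over $\alpha_1$ is bounded by $c$ times the $K \otimes M$ contribution of $(I \otimes Q_{\alpha_2}) u$. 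Summing the analogous bounds over the second index and then doing the symmetric argument for the $M \otimes K$ term yields the upper bound. The $L_2$-orthogonality identity~\eqref{eq:ortho-2d} then upgrades the seminorm bound to the full $H^1$-norm.

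The left-hand inequality is the easy direction: since $u = \sum_{(\alpha_1,\alpha_2)} Q_{\alpha_1,\alpha_2} u$, the triangle inequality in the $H^1$-seminorm together with the elementary inequality $(\sum_{i=1}^4 a_i)^2 \le 4 \sum_{i=1}^4 a_i^2$ gives $|u|_1^2 \le 4 \sum_{(\alpha_1,\alpha_2)} |Q_{\alpha_1,\alpha_2} u|_1^2$, so $c = 4$ works there.

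The main obstacle I anticipate is bookkeeping rather than anything conceptual: one must be careful that the univariate stability estimate~\eqref{eq:proof:thm:stable-1d} is being applied to the correct factor while the orthogonal ``mass part'' in the other factor is genuinely preserved, i.e.\ that the mixed cross terms between $S_0$ and $S_1$ really do vanish under $M$ but emphatically not under $K$ (which is why we cannot simply split $K$ the same way). Writing everything in terms of the operators $K \otimes M$ and $M \otimes K$ and noting that $Q_{\alpha_1} \otimes Q_{\alpha_2}$ commutes with tensor products of operators acting in the complementary slots keeps this transparent. A minor subtlety is that the univariate estimate was stated for the seminorm via~\eqref{eq:proof:thm:stable-1d}; for the mixed term $M \otimes K$ one needs the stability of $Q_0$ measured by $M$ (i.e.\ the $L_2$-norm), which is immediate from $L_2$-orthogonality, so no new univariate result is required.
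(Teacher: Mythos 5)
Your proposal is correct and follows essentially the same route as the paper: split the $H^1$-seminorm into the $K\otimes M$ and $M\otimes K$ contributions, use exact $L_2$-orthogonality for the mass factor ($M_0+M_1=M$) and the univariate stability from Theorem~\ref{thm:stable-1d} for the stiffness factor ($K_0+K_1\le cK$), and handle the easy direction by Cauchy--Schwarz. The only cosmetic difference is that you split the mass factor as an exact identity before applying the 1D estimate, whereas the paper bounds $K_{\alpha_1}\le cK$ and $M_{\alpha_2}\le M$ termwise and then sums; the ingredients and constants are the same.
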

\begin{proof}
    The left inequality follows by the Cauchy-Schwarz inequality.
    For the right one, fix $(\alpha_1,\alpha_2) \in \{0,1\}^2$.
    The $H^1$-seminorm can be written using tensor products of
    one-dimensional operators as
    \begin{equation}
        \label{eq:eqn-1}
        | Q_{\alpha_1,\alpha_2} u |_1^2 =
        | Q_{\alpha_1,\alpha_2} u |_{K \otimes M}^2 +
        | Q_{\alpha_1,\alpha_2} u |_{M \otimes K}^2.
    \end{equation}
    The first term can be rewritten, using the definitions and basic identities
    for tensor products of operators, as
    \[
        | Q_{\alpha_1,\alpha_2} u |_{K \otimes M}^2
        = \sprod {Q_{\alpha_1,\alpha_2}' (K \otimes M) Q_{\alpha_1,\alpha_2} u} {u} %\\
        = \sprod {(K_{\alpha_1} \otimes M_{\alpha_2}) u} {u}.
    \]
    Due to orthogonality and Theorem~\ref{thm:stable-1d},
    we have
        $M_0 + M_1 = M$ and
        %\qquad
        $K_0 + K_1 \le c K$,
    where all summands are positive semidefinite operators.
    This implies that we can estimate, in the spectral sense,
    $K_{\alpha_1} \le cK$ and $M_{\alpha_2} \le M$,
    and we obtain
    \[
        | Q_{\alpha_1,\alpha_2} u |_{K \otimes M}^2
        \le c | u |_{K \otimes M}^2.
    \]
    Treating the second term in \eqref{eq:eqn-1} analogously, we obtain
    \[
        | Q_{\alpha_1,\alpha_2} u |_1^2 \le c (| u |_{K \otimes M}^2 + | u |_{M \otimes K}^2)
        = c | u |_1^2.
    \]
    The right inequality now follows by summing up over all $(\alpha_1,\alpha_2)$.
    The result for the full $H^1$-norm follows by adding the identity \eqref{eq:ortho-2d}.
\end{proof}

\subsection{Stable splitting in arbitrary dimensions}
\label{sec:splitting-nd}

For any $d \in \mathbb N$, we define multiindices $\alpha \in \{0,1\}^d$
and generalize the notations from Section~\ref{sec:splitting-2d} in the straightforward
way to higher dimensions.
We obtain the splitting into the direct sum of $2^d$ subspaces
\[
    S^d = \bigoplus_{\alpha} S_\alpha,
    \qquad \text{where} \qquad
    S_\alpha = S_{\alpha_1} \otimes \ldots \otimes S_{\alpha_d}.
\]
The $L_2$-orthogonal projectors into the subspaces are given by
\[
    Q_{\alpha} = Q_{\alpha_1} \otimes \ldots \otimes Q_{\alpha_d}: S^{d} \to S_\alpha.
\]
As in the two-dimensional case, we can prove that this splitting is
$H^1$-stable.

\begin{theorem}\label{thm:stable-nd}
    For any $d$-dimensional tensor product spline $u \in S^{d}$, we have
    \[
        c^{-1} |u|_1^2
        \le
        \sum_{\alpha=(0,\ldots,0)}^{(1,\ldots,1)}  | Q_{\alpha} u |_1^2
        \le
        c |u|_1^2
    \]
    and the corresponding result for the full $H^1$-norm.
\end{theorem}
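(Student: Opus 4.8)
The plan is to mimic the two-dimensional proof of Theorem~\ref{thm:stable-2d}, generalizing the tensor-product bookkeeping to $d$ factors. The left inequality is immediate from the Cauchy--Schwarz (or triangle) inequality applied to the decomposition $u = \sum_\alpha Q_\alpha u$, with a constant depending only on $2^d$; no spline-specific input is needed there. The substance is the right inequality, and the natural approach is: first expand the $H^1$-seminorm of a single component $Q_\alpha u$ as a sum of $d$ tensor-product terms, one for each coordinate direction carrying the stiffness operator $K$ while the others carry the mass operator $M$; then estimate each such term using the one-dimensional spectral bounds $M_0 + M_1 = M$ and $K_0 + K_1 \le cK$ from Theorem~\ref{thm:stable-1d}; and finally sum over all $2^d$ multiindices $\alpha$ and recombine to recover $|u|_1^2$.

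Concretely, I would first record the identity
\begin{equation*}
    |Q_\alpha u|_1^2 = \sum_{k=1}^{d} |Q_\alpha u|_{M \otimes \cdots \otimes K \otimes \cdots \otimes M}^2,
\end{equation*}
where in the $k$-th summand the operator $K$ sits in the $k$-th slot and $M$ in all others; this is just the tensor-product expression for the $H^1$-seminorm on $(0,1)^d$. Next, fixing $k$ and using $Q_\alpha = Q_{\alpha_1} \otimes \cdots \otimes Q_{\alpha_d}$ together with the basic identity $(A \otimes B)' (C \otimes D)(A \otimes B) = (A'CA) \otimes (B'DB)$ and its $d$-fold analogue, I would rewrite the $k$-th summand as $\sprod{(M_{\alpha_1} \otimes \cdots \otimes K_{\alpha_k} \otimes \cdots \otimes M_{\alpha_d}) u}{u}$. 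Then, invoking $M_{\alpha_j} \le M$ for $j \ne k$ and $K_{\alpha_k} \le cK$ in the spectral (Loewner) sense — each of which follows from Theorem~\ref{thm:stable-1d} exactly as in the 2D proof — and using that the tensor product of positive semidefinite operators is monotone in each factor, I would conclude $\sprod{(M_{\alpha_1} \otimes \cdots \otimes K_{\alpha_k} \otimes \cdots \otimes M_{\alpha_d}) u}{u} \le c \, |u|_{M \otimes \cdots \otimes K \otimes \cdots \otimes M}^2$ with the same position of $K$. Summing over $k$ gives $|Q_\alpha u|_1^2 \le c\,|u|_1^2$ for each $\alpha$, and summing over the $2^d$ values of $\alpha$ yields the right inequality with a constant enlarged by a factor $2^d$. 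The claim for the full $H^1$-norm then follows by adding the $d$-dimensional $L_2$-orthogonality identity $\norm{u}_0^2 = \sum_\alpha \norm{Q_\alpha u}_0^2$, which holds because $Q_\alpha$ is a tensor product of $L_2$-orthogonal projectors.

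The only real obstacle is making the tensor-product monotonicity step precise in arbitrary dimension: one needs that if $0 \le P_j \le R_j$ for each $j$ (in the spectral sense) with all operators symmetric positive semidefinite, then $P_1 \otimes \cdots \otimes P_d \le R_1 \otimes \cdots \otimes R_d$. This is standard — it follows by changing one factor at a time, since $P \otimes X \le R \otimes X$ whenever $0 \le P \le R$ and $X \ge 0$, as $(R-P) \otimes X$ is a tensor product of positive semidefinite operators and hence positive semidefinite — but it is worth stating explicitly since it is the one ingredient not already spelled out in the 2D case. Everything else is routine bookkeeping, so I expect the proof to be no longer than that of Theorem~\ref{thm:stable-2d}, and indeed one could reasonably present it as ``the proof is identical to that of Theorem~\ref{thm:stable-2d}, replacing the two-term expansion of the $H^1$-seminorm by the $d$-term expansion and using the tensor-product monotonicity of positive semidefinite operators in each of the $d$ factors.''
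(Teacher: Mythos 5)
Your proposal is correct and follows exactly the route the paper intends: the paper's own proof of Theorem~\ref{thm:stable-nd} simply states that it is ``completely analogous to Theorem~\ref{thm:stable-2d},'' and your write-up is precisely that generalization (the $d$-term expansion of the seminorm, the one-dimensional spectral bounds from Theorem~\ref{thm:stable-1d}, and tensor-product monotonicity of positive semidefinite operators). Your explicit statement of the monotonicity lemma is a welcome addition, not a deviation.
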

\begin{proof}
    Completely analogous to Theorem~\ref{thm:stable-2d}.
\end{proof}

%%%%%%%%%%%%%%%%%%%%%%%%%%%%%%%%%%%%%%%%%%%%%%%%%%%%%%%%%%%%%%%%%%%%%%%%%%%%%%%%

\section{Construction of a robust multigrid method}
\label{sec:construction}

Recall that $S$ was a univariate spline space of degree $p$ and mesh size $h$.
Let $S_c \subset S$ be the analogous coarse spline space with uniform mesh size
$2h$.  For the construction of our two-grid method in $d$ dimensions in
accordance with the framework introduced in Section~\ref{sec:mg}, we let
\[
    V := S^d,
    \qquad
    V_c := (S_c)^d \subset V.
\]
The prolongation $P : V_c \to V$ is the canonical embedding of the coarse
tensor product spline space in the fine one.
It can be represented as the $d$-fold tensor product of prolongations
for the univariate spline spaces, $I: S_c \to S$.

The following result states that a robust approximation error estimate holds
for the Galerkin projector to the coarse spline space.  It was proved for $d=1$
and $d=2$ in \cite{HTZ:2016}. We extend the proof to arbitrary dimensions in
the Appendix.

\begin{lemma}\label{lemma:approx:dd}
    The $A$-orthogonal projector $T_c : S^d \to (S_c)^d$ satisfies the
    approximation error estimate
    \begin{equation*}
        \| (I-T_c) u \|_{L_2(\Omega)} \le c h \|u\|_{A}
        \qquad \forall u\in S^d
    \end{equation*}
    with a constant $c$ which is independent of $h$ and $p$ (but may depend on $d$).
\end{lemma}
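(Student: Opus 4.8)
The plan is to prove Lemma~\ref{lemma:approx:dd} by a duality (Aubin--Nitsche) argument, generalising the proof given for $d=1,2$ in \cite{HTZ:2016}; the genuinely new ingredient in higher dimensions is a tensor-product telescoping of the univariate approximation estimates from \cite{Takacs:Takacs:2015} which keeps all constants independent of $p$.

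Write $e := (I-T_c)u$. Since $T_c$ is the $A$-orthogonal projector onto $(S_c)^d$, we have $a(e,w)=0$ for all $w\in(S_c)^d$ and $\norm{e}_A\le\norm{u}_A$. Let $\psi\in H^1(\Omega)$ solve the dual problem $a(\psi,v)=(e,v)_{L_2(\Omega)}$ for all $v\in H^1(\Omega)$. By elliptic regularity for the Neumann problem on the cube $\Omega=(0,1)^d$ one has $\psi\in H^2(\Omega)$, $\partial_n\psi=0$ on $\partial\Omega$, and $\norm{\psi}_{H^2(\Omega)}\le c\,\norm{e}_{L_2(\Omega)}$ with $c=c(d)$. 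Using the Galerkin orthogonality of $e$ and symmetry of $a$,
\begin{equation*}
    \norm{e}_{L_2(\Omega)}^2 = a(\psi,e) = a(\psi-\psi_c,e) \le \norm{\psi-\psi_c}_A\,\norm{e}_A
    \qquad \forall\,\psi_c\in(S_c)^d .
\end{equation*}
Hence the lemma follows once we establish the robust coarse-space approximation estimate
\begin{equation*}
    \inf_{\psi_c\in(S_c)^d}\norm{\psi-\psi_c}_{H^1(\Omega)} \le c\,h\,\norm{\psi}_{H^2(\Omega)}
\end{equation*}
with $c$ independent of $h$ and $p$: chaining the three displayed estimates then yields $\norm{e}_{L_2(\Omega)}\le c\,h\,\norm{e}_A\le c\,h\,\norm{u}_A$, which is the assertion.

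To prove this approximation estimate I would build $\psi_c$ as a $d$-fold tensor product $\psi_c:=(\pi\otimes\cdots\otimes\pi)\,\psi$ of a suitable univariate projector $\pi$ onto the coarse interior spline space $S_{c,0}\subset S_c$ (mesh size $2h$), for which \cite{Takacs:Takacs:2015} provides the $p$-robust first-order estimates $\norm{(I-\pi)\varphi}_0\le c\,h\,|\varphi|_1$ (Theorem~\ref{thm:approx}) and $|(I-\pi)\varphi|_1\le c\,h\,|\varphi|_2$ for $\varphi$ with vanishing first derivative at the endpoints, together with $L_2$- and $H^1$-stability of $\pi$ (the latter following, on splines, from Theorem~\ref{thm:stable-1d}). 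Using the telescoping identity
\begin{equation*}
    I-\bigotimes_{j=1}^{d}\pi \;=\; \sum_{k=1}^{d}\Bigl(\bigotimes_{j<k}\pi\Bigr)\otimes(I-\pi)\otimes\Bigl(\bigotimes_{j>k}I\Bigr),
\end{equation*}
the error $\psi-\psi_c$ splits into $d$ terms, each acting as $I-\pi$ in one distinguished direction $k$. In that direction one invokes the univariate estimates above -- this is legitimate since $\psi$ and its tangential derivatives have vanishing normal derivative on the faces perpendicular to $e_k$, by the Neumann condition on $\psi$ -- while a slicing/Fubini argument in the remaining directions, using the stability of the remaining tensor factors, keeps the constant of the form $c(d)\,h$; summing over $k$ and over the seminorm and $L_2$ contributions gives the claim in the full $H^1(\Omega)$-norm.

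The main obstacle is the choice of the univariate projector $\pi$: it must be \emph{simultaneously} $L_2$-stable with constant one (so that factors untouched by $I-\pi$ in the telescoping do not lose $L_2$-control), $H^1$-stable, and first-order accurate in both the $L_2$- and the energy norm, all with $p$-independent constants. Only then does the tensor-product combination neither accumulate a $p$-dependence nor pick up mixed derivatives that are uncontrolled by $\norm{\psi}_{H^2(\Omega)}$ once $d\ge 3$; this is precisely the difficulty that did not arise for $d\le 2$ and that obstructed a direct extension of the low-rank approach of \cite{HTZ:2016}. Once such a projector is identified among the constructions of \cite{Takacs:Takacs:2015}, the remaining bookkeeping in the telescoping, and the regularity and duality steps, are routine.
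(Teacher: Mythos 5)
Your route is genuinely different from the paper's. The paper never invokes elliptic regularity or an $H^2$ dual problem: it proves the lemma by induction on $d$, entirely at the discrete level, using an algebraic duality lemma (for an $A$-orthogonal projector $T$ onto a subspace of a finite-dimensional space, $\|Tu\|_M\le\theta\|u\|_A$ for all $u$ is equivalent to $\|Tu\|_A\le\theta\|u\|_{AM^{-1}A}$ for all $u$). Starting from the univariate estimate $\|(I-T_1)u\|_{M}\le ch\|u\|_{K+M}$ of \cite{HTZ:2016}, it bounds $I-T_{d-1}\otimes T_1$ by telescoping, converts back and forth with the duality lemma, and finally uses that $T_d$ is the best approximation in the $A_d$-norm to replace $T_{d-1}\otimes T_1$ by $T_d$. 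This keeps every ingredient inside the spline spaces, where the $p$-robust results of \cite{Takacs:Takacs:2015} actually live. Your Aubin--Nitsche reduction ($\|e\|_0^2=a(\psi-\psi_c,e)\le\|\psi-\psi_c\|_A\|e\|_A$, convexity of the cube giving $\|\psi\|_{H^2}\le c\|e\|_0$) is structurally sound and would, if completed, give a cleaner statement; but it trades the discrete induction for a continuous approximation problem that is harder than you acknowledge.

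The genuine gap is the $p$-robust multivariate estimate $\inf_{\psi_c\in(S_c)^d}\|\psi-\psi_c\|_{H^1}\le ch\|\psi\|_{H^2}$, which you reduce to the existence of a univariate projector $\pi$ that is simultaneously $L_2$-stable, $H^1$-stable, and first-order accurate in both $L_2$ and $H^1$, all with $p$-independent constants --- and then you do not exhibit such a projector. None of the results available in the paper supplies one. Theorem~\ref{thm:approx} gives only $\|(I-Q_0)\varphi\|_0\le ch|\varphi|_1$; the $H^1$-error bound $|(I-\pi)\varphi|_1\le ch|\varphi|_2$ you attribute to \cite{Takacs:Takacs:2015} is not quoted anywhere in the paper and, in the form you need it, comes with boundary-condition restrictions that the dual solution $\psi$ does not obviously satisfy ($\partial_n\psi=0$ kills only the \emph{first} odd normal derivative, whereas $S_0$ is defined by vanishing of \emph{all} odd derivatives of order $<p$; your appeal to ``tangential derivatives of $\psi$ having vanishing normal derivative'' is moreover only formal for $\psi\in H^2$). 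Equally important, the $H^1$-stability you want to borrow from Theorem~\ref{thm:stable-1d} is proved there only for \emph{spline} arguments $u\in S$ --- its proof combines the inverse inequality in $S_0$ with the approximation property, and the inverse-inequality step is meaningless for a general $H^2$ function such as $\psi$ or its slices. So the factors $\pi$ acting in the ``untouched'' directions of your telescoping identity cannot be controlled by anything established in the paper. Until a univariate projector with all four properties is actually constructed (or cited precisely), the proof does not close; this missing construction is exactly the hard content, not ``routine bookkeeping.''
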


In the following subsections, we construct a smoother for the two-grid
method on these nested spline spaces which leads to a robust and efficient
iterative method.

\subsection{A multigrid smoother based on subspace correction}

In each of the $2^d$ subspaces $S_\alpha \subset S^{d}$, $\alpha \in \{0,1\}^d$,
defined in Section~\ref{sec:splitting-nd},
we prescribe a local, symmetric and positive definite smoothing operator
$L_\alpha: S_\alpha \to S_\alpha'$.  The overall smoothing operator is then
given by the additive subspace operator
\begin{equation}
    \label{eq:smoother}
    L := \sum_\alpha Q_\alpha' L_\alpha Q_\alpha : S^{d} \to S^{d'},
\end{equation}
from $S^d$ to its dual $S^{d'}$,
and its inverse has the form
\[
    L^{-1} = \sum_\alpha L_\alpha^{-1} Q_\alpha' : S^{d'} \to S^{d}.
\]
The assumptions of Theorem~\ref{thm:twogrid} for $L$, and thus the convergence
of the two-grid method with such a smoother, can be guaranteed under simple
assumptions on the subspace operators $L_\alpha$, as the following two lemmas
show.
The stability of the space splitting is crucial to both proofs.
Although we do not explicitly use any results from the literature on subspace
correction methods, we rely heavily on the ideas developed therein;
cf., e.g., \cite{Xu:1992,GriebelOswald:1995a}.

\begin{lemma}
    \label{lem:smoothing-prop}
    Assume that for every $\alpha \in \{0,1\}^d$, we have
    \begin{equation}\label{eq:lem:smoothing-prop}
        \sprod {A v_\alpha}{v_\alpha} \le c \sprod {L_\alpha v_\alpha}{v_\alpha}
        \qquad
        \forall v_\alpha \in S_\alpha.
    \end{equation}
    Then the subspace correction smoother satisfies
    \[
        \sprod {A v}{v} \le c \sprod{L v}{v}
        \qquad \forall v \in S^{d}.
    \]
\end{lemma}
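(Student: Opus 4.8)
The plan is to bound $\sprod{Av}{v}$ from above by a constant times $\sum_\alpha \sprod{A Q_\alpha v}{Q_\alpha v}$, and then to use the per-subspace hypothesis~\eqref{eq:lem:smoothing-prop} to replace each $\sprod{A Q_\alpha v}{Q_\alpha v}$ by $c\sprod{L_\alpha Q_\alpha v}{Q_\alpha v}$, which sums to $c\sprod{Lv}{v}$ by the definition~\eqref{eq:smoother}. The first step is precisely the upper bound in the stable splitting, Theorem~\ref{thm:stable-nd}, applied to the full $H^1$-norm: since $\norm{v}_A^2 = \norm{v}_{H^1(\Omega)}^2$, we have
\[
    \sprod{Av}{v} = \norm{v}_A^2 \le c \sum_\alpha \norm{Q_\alpha v}_{H^1(\Omega)}^2 = c \sum_\alpha \sprod{A Q_\alpha v}{Q_\alpha v}.
\]

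Next I would apply the hypothesis~\eqref{eq:lem:smoothing-prop} to each summand, noting that $Q_\alpha v \in S_\alpha$, so that $\sprod{A Q_\alpha v}{Q_\alpha v} \le c \sprod{L_\alpha Q_\alpha v}{Q_\alpha v}$. Summing over $\alpha$ and using the identity $\sprod{Lv}{v} = \sum_\alpha \sprod{L_\alpha Q_\alpha v}{Q_\alpha v}$, which follows directly from $L = \sum_\alpha Q_\alpha' L_\alpha Q_\alpha$ and the self-adjointness of each $L_\alpha$, yields
\[
    \sprod{Av}{v} \le c \sum_\alpha \sprod{L_\alpha Q_\alpha v}{Q_\alpha v} = c \sprod{Lv}{v},
\]
as claimed; here the two occurrences of the generic constant $c$ are absorbed into one.

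There is really no serious obstacle in this argument; the work has all been done upfront in establishing the $H^1$-stability of the splitting (Theorem~\ref{thm:stable-nd}). The only point requiring a modicum of care is the identification $\sprod{A Q_\alpha v}{Q_\alpha v} = \norm{Q_\alpha v}_{H^1(\Omega)}^2$ and the verification that the quadratic form of the additive operator $L$ decouples exactly across the subspaces — the latter uses that the $Q_\alpha$ are projections onto a direct sum decomposition, so there are no cross terms, and the former is just the remark $\norm{w}_A^2 = \norm{w}_{H^1(\Omega)}^2$ applied to $w = Q_\alpha v$. One should also make explicit that it is the \emph{upper} bound of Theorem~\ref{thm:stable-nd} that is used (so the resulting constant depends on $d$ through that theorem), and that $\tau$ and $\nu_0$ in Theorem~\ref{thm:twogrid} then inherit this dependence. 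A dual/companion statement — the lower bound needed to verify the approximation property~\eqref{eq:ap} — will presumably be handled in the next lemma using the \emph{lower} bound of Theorem~\ref{thm:stable-nd}, so I would phrase this proof to foreshadow that symmetry.
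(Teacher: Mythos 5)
Your proof is correct and follows exactly the paper's argument: the left inequality of Theorem~\ref{thm:stable-nd} (for the full $H^1$-norm, i.e.\ the $A$-norm) gives $\sprod{Av}{v}\le c\sum_\alpha\sprod{AQ_\alpha v}{Q_\alpha v}$, the hypothesis~\eqref{eq:lem:smoothing-prop} is applied termwise, and the definition~\eqref{eq:smoother} of $L$ yields $\sum_\alpha\sprod{L_\alpha Q_\alpha v}{Q_\alpha v}=\sprod{Lv}{v}$. The only cosmetic remark is that the last identity needs nothing beyond the definition of the dual pairing (not self-adjointness or absence of cross terms), but this does not affect correctness.
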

\begin{proof}
    Due to Theorem~\ref{thm:stable-nd} and~\eqref{eq:lem:smoothing-prop}, we have
    \[
        \sprod {A v}{v}
        \le
        c \sum_\alpha \sprod {A Q_\alpha v} {Q_\alpha v}
        \le
        c \sum_\alpha \sprod {L_\alpha Q_\alpha v} {Q_\alpha v}
        =
        c \sprod{L v}{v}.
    \]
\end{proof}

\begin{lemma}
    \label{lem:approx-prop}
    Assume that for every $\alpha \in \{0,1\}^d$, we have
    \begin{equation}\label{eq:lem:approx-prop}
        \sprod {L_\alpha v_\alpha}{v_\alpha} \le c \sprod {(A + h^{-2} M^d) v_\alpha}{v_\alpha}
        \qquad
        \forall v_\alpha \in S_\alpha,
    \end{equation}
    where $M^d: S^d \to S^{d'}$ is the mass operator in the tensor product
    spline space.
    Then the subspace correction smoother satisfies
    \[
        \norm{ (I - T_c) v }_L \le c \norm{v}_A
        \qquad \forall v \in S^{d}.
    \]
\end{lemma}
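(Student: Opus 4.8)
The plan is to expand $\norm{(I-T_c)v}_L^2$ via the additive structure of $L$, apply the hypothesis \eqref{eq:lem:approx-prop} in each subspace, and then split the resulting bound into an $H^1$-seminorm contribution and an $h^{-2}$-weighted $L_2$ contribution, which I would then control by the stability of the splitting and by the coarse-grid approximation estimate, respectively.

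First I would set $w := (I-T_c)v$ and use $L = \sum_\alpha Q_\alpha' L_\alpha Q_\alpha$ together with the $L_2$-orthogonality of the projectors $Q_\alpha$ to write
\[
    \norm{w}_L^2 = \sprod{Lw}{w} = \sum_\alpha \sprod{L_\alpha Q_\alpha w}{Q_\alpha w}.
\]
Applying assumption \eqref{eq:lem:approx-prop} with $v_\alpha = Q_\alpha w$, and recalling that $\sprod{Av}{v} = \norm{v}_1^2$ and $\sprod{M^d v}{v} = \norm{v}_0^2$, this gives
\[
    \norm{w}_L^2 \le c \sum_\alpha \bigl( \norm{Q_\alpha w}_1^2 + h^{-2}\norm{Q_\alpha w}_0^2 \bigr).
\]
Then I would invoke Theorem~\ref{thm:stable-nd} to bound $\sum_\alpha \norm{Q_\alpha w}_1^2 \le c\norm{w}_1^2$, and the $d$-dimensional analogue of the orthogonality identity \eqref{eq:ortho-2d} to get $\sum_\alpha \norm{Q_\alpha w}_0^2 = \norm{w}_0^2$, so that $\norm{w}_L^2 \le c\bigl(\norm{w}_1^2 + h^{-2}\norm{w}_0^2\bigr)$.

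It then remains to estimate the right-hand side against $\norm{v}_A$. Since $\norm{\cdot}_1 = \norm{\cdot}_A$ and $T_c$ is the $A$-orthogonal projector, we have $\norm{w}_1 = \norm{(I-T_c)v}_A \le \norm{v}_A$. For the mass term, Lemma~\ref{lemma:approx:dd} yields $\norm{w}_0 = \norm{(I-T_c)v}_{L_2(\Omega)} \le c\,h\,\norm{v}_A$, hence $h^{-2}\norm{w}_0^2 \le c\,\norm{v}_A^2$. Combining these gives $\norm{(I-T_c)v}_L \le c\,\norm{v}_A$, as claimed.

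I do not expect a genuine obstacle here; the argument is essentially bookkeeping. The one point worth emphasizing is that the particular form of the hypothesis on $L_\alpha$---comparing with $A + h^{-2}M^d$ rather than with $A$ alone---is chosen precisely so that its $H^1$ part is absorbed by the $H^1$-stability of the splitting, while its $h^{-2}$-weighted mass part is exactly matched by the $O(h)$ coarse-grid approximation estimate of Lemma~\ref{lemma:approx:dd}; dropping either ingredient would break the argument.
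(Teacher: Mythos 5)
Your proof is correct and follows essentially the same route as the paper's: apply the hypothesis on each $L_\alpha$, then use the $H^1$-stability of the splitting (Theorem~\ref{thm:stable-nd}) together with the $L_2$-orthogonality to bound $\norm{\cdot}_L^2 \le c(\norm{\cdot}_A^2 + h^{-2}\norm{\cdot}_{M^d}^2)$, and finally invoke the $A$-stability of $T_c$ and Lemma~\ref{lemma:approx:dd}. The paper phrases the first inequality for a general element of $S^d$ and only then substitutes $(I-T_c)v$, whereas you substitute at the outset, but that is a purely cosmetic difference.
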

\begin{proof}
    From~\eqref{eq:lem:approx-prop}, Theorem~\ref{thm:stable-nd} and $L_2$-orthogonality, we obtain
    \[
        \sprod {L v}{v}
        \le
        c \sum_\alpha \sprod {(A + h^{-2} M^d) Q_\alpha v} {Q_\alpha v}
        \le
        c \sprod{(A + h^{-2} M^d) v}{v}.
    \]
    Thus, it follows
    \[
        \norm{ (I - T_c) v }_L^2
        \le c \norm{ (I - T_c) v }_A^2 + c h^{-2} \norm{ (I - T_c) v }_{M^d}^2
        \le c \norm{  v }_A^2,
    \]
    where we used the stability of the coarse-grid projector and the
    coarse-grid approximation property Lemma~\ref{lemma:approx:dd}.
\end{proof}

\subsection{Choice of the local smoothing operators}
\label{sec:smoother}

We now construct suitable local operators $L_\alpha$ which satisfy the
assumptions of Lemma~\ref{lem:smoothing-prop} and Lemma~\ref{lem:approx-prop}.
In the two-dimensional case, the operator associated with the bilinear form
\eqref{eq:bilinear} admits the representation
\[
    A = K \otimes M + M \otimes K + M \otimes M
\]
in terms of the stiffness and mass operators for the univariate case.
Restricting $A$ to a subspace $S_\alpha = S_{\alpha_1,\alpha_2}$, we
obtain
\[
    A_\alpha := Q_\alpha' A Q_\alpha
    =
    K_{\alpha_1} \otimes M_{\alpha_2} +
    M_{\alpha_1} \otimes K_{\alpha_2} +
    M_{\alpha_1} \otimes M_{\alpha_2}.
\]
The inverse inequality in $S_0$ (Theorem~\ref{thm:inverse})
allows us to estimate
\[
    K_0 \le \sigma M_0,
\]
where $\sigma = 12 h^{-2}$.
We obtain subspace smoothers $L_\alpha$ by replacing $K_0$ by $\sigma M_0$,
\begin{alignat*}{2}
    A_{00} & \le (1+2 \sigma) M_0 \otimes M_0          && =: L_{00},   \\
    A_{01} & \le M_0 \otimes ((1+\sigma) M_1 + K_1)  && =: L_{01},   \\
    A_{10} & \le ((1+\sigma) M_1 + K_1) \otimes M_0  && =: L_{10},   \\
    A_{11} & = M_1 \otimes M_1 + K_1 \otimes M_1 + M_1 \otimes K_1  && =: L_{11},%Q_{11}' A Q_{11}
\end{alignat*}
where \eqref{eq:lem:smoothing-prop}, the assumption of
Lemma~\ref{lem:smoothing-prop}, holds by construction.
It is easy to see that each $L_\alpha$ can be spectrally bounded
from above by a constant times the matrix $Q_\alpha' (A + h^{-2} M \otimes M) Q_\alpha$,
which proves the assumption~\eqref{eq:lem:approx-prop} of Lemma~\ref{lem:approx-prop}. Using
the statements of these two lemmas, Theorem~\ref{thm:twogrid} implies the two-grid convergence.

The same approach generalizes to higher dimensions, and we illustrate this
in the three-dimensional setting.
Here, we have
\[
    A = K \otimes M \otimes M + M \otimes K \otimes M + M \otimes M \otimes K + M \otimes M \otimes M.
\]
Again, we define $A_{\alpha}$ as above and obtain the operators $L_\alpha$ by replacing $K_0$ by $\sigma M_0$,
\begin{alignat*}{2}
    A_{000} & \le (1+3 \sigma) M_0 \otimes M_0 \otimes M_0          && =: L_{000},   \\
    A_{001} & \le M_0 \otimes M_0 \otimes ((1+2 \sigma) M_1 + K_1)  && =: L_{001},   \\
    A_{010} & \le M_0 \otimes ((1+2 \sigma) M_1 + K_1) \otimes M_0  && =: L_{010},   \\
    A_{100} & \le ((1+2 \sigma) M_1 + K_1) \otimes M_0 \otimes M_0  && =: L_{100},   \\
    A_{011} & \le M_0 \otimes ((1+\sigma) M_1 \otimes M_1 + K_1 \otimes M_1 + M_1 \otimes K_1) && =: L_{011},   \\
    A_{110} & \le ((1+\sigma) M_1 \otimes M_1 + K_1 \otimes M_1 + M_1 \otimes K_1) \otimes M_0 && =: L_{110},   \\
    A_{101} & \le K_1 \otimes M_0 \otimes M_1 + (1+\sigma) M_1 \otimes M_0 \otimes M_1 + M_1 \otimes M_0 \otimes K_1 && =: L_{101}, \\
    A_{111} & = M_1 \otimes M_1 \otimes M_1 + K_1 \otimes M_1 \otimes M_1 + M_1 \otimes K_1 \otimes M_1 + M_1 \otimes M_1 \otimes K_1 \hspace{-.1cm}    && =: L_{111}. %Q_{111}' A Q_{111}
\end{alignat*}
We point out that, whereas $L_{011}$ and $L_{110}$ permit a tensor product
factorization, the operator $L_{101}$ cannot directly be factorized due to the
ordering of the involved spaces. However, the tensor product space $S_{101}$
is isomorphic to $S_{011}$ by a simple swapping of the order of the involved
tensor products.  We exploit this in Section~\ref{sec:matrix-inversion} below
by a simple renumbering of the degrees of freedom in order to obtain an
efficient method for inverting $L_{101}$.

It is clear that the rule of replacing $K_0$ by $\sigma M_0$ in each
operator $A_\alpha$ to obtain $L_\alpha$ extends directly
to arbitrary dimension $d$.
By the same arguments as above, we see that the
resulting subspace correction smoother satisfies the assumptions of
Lemma~\ref{lem:smoothing-prop} and Lemma~\ref{lem:approx-prop}.
Thus Theorem~\ref{thm:twogrid} shows that the resulting two-grid method
converges robustly with respect to $h$ and $p$.
We summarize this in the following theorem.

\begin{theorem}
    For any $d \in \mathbb N$,
    there exist choices for $\tau$ and $\nu$, independent of $h$ and $p$, such
    that the two-grid method in $S^{d}$ with the smoother induced by the
    subspace operators $L_\alpha$ as constructed above converges with a rate $q<1$
    which does not depend on the grid size $h$ or the spline degree $p$.
\end{theorem}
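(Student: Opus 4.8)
The plan is to verify that the subspace operators $L_\alpha$ constructed above satisfy the hypotheses of Lemma~\ref{lem:smoothing-prop} and Lemma~\ref{lem:approx-prop} with constants depending only on the dimension $d$, and then to invoke Theorem~\ref{thm:twogrid}. The substantive analytic work — the stable splitting (Theorem~\ref{thm:stable-nd}), the coarse-grid estimate (Lemma~\ref{lemma:approx:dd}), and the abstract two-grid convergence theorem — has already been carried out, so what remains is to track the spectral bookkeeping for the operators $A_\alpha$ and $L_\alpha$.

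First I would record the two elementary facts on which everything rests. Squaring Theorem~\ref{thm:inverse} and restricting via the $L_2$-projector $Q_0$ gives $K_0 \le \sigma M_0$ in the Loewner order, with $\sigma = 12 h^{-2}$. Second, I would use the standard monotonicity of tensor products under the Loewner order: if $X \le Y$ are self-adjoint and the remaining factors are positive semidefinite, then $\cdots \otimes X \otimes \cdots \le \cdots \otimes Y \otimes \cdots$. Since the operator $A$ in $d$ dimensions is the sum of the $d$ ``diffusion'' terms $M \otimes \cdots \otimes K \otimes \cdots \otimes M$ and the ``reaction'' term $M \otimes \cdots \otimes M$, restricting to $S_\alpha$ (so that each $K$ in position $i$ becomes $K_{\alpha_i}$ and each $M$ becomes $M_{\alpha_i}$) and replacing every occurrence of $K_0$ by $\sigma M_0$ only enlarges the operator. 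Hence $A_\alpha \le L_\alpha$ in the spectral sense, which is precisely hypothesis~\eqref{eq:lem:smoothing-prop} of Lemma~\ref{lem:smoothing-prop} with constant $1$.

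Next I would establish hypothesis~\eqref{eq:lem:approx-prop} of Lemma~\ref{lem:approx-prop}, i.e.\ $L_\alpha \le c\,(A_\alpha + h^{-2} M^d_\alpha)$ with $M^d_\alpha := Q_\alpha' M^d Q_\alpha = M_{\alpha_1} \otimes \cdots \otimes M_{\alpha_d}$. The key observation is that, for each coordinate $i$, the $i$-th term of $L_\alpha$ is either the unchanged diffusion term $M_{\alpha_1} \otimes \cdots \otimes K_1 \otimes \cdots \otimes M_{\alpha_d}$ (when $\alpha_i = 1$), which is one of the positive semidefinite summands of $A_\alpha$ and hence $\le A_\alpha$; or it is $\sigma\, M_{\alpha_1} \otimes \cdots \otimes M_0 \otimes \cdots \otimes M_{\alpha_d} = \sigma M^d_\alpha = 12 h^{-2} M^d_\alpha$ (when $\alpha_i = 0$), since in that case the factor in position $i$ was already $M_0 = M_{\alpha_i}$. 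Adding the reaction term $M^d_\alpha \le A_\alpha$ and summing the at most $d$ diffusion terms of each type gives $L_\alpha \le (d+1) A_\alpha + 12 d\, h^{-2} M^d_\alpha \le 12 d\,(A_\alpha + h^{-2} M^d_\alpha)$, a bound with a $d$-dependent but $h$- and $p$-independent constant. (The non-factorizability of operators such as $L_{101}$ plays no role in this estimate; it concerns only the efficient realization of the smoother.)

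With both hypotheses verified, Lemma~\ref{lem:smoothing-prop} yields the inverse inequality $\norm{v}_A^2 \le C_I \norm{v}_L^2$ and Lemma~\ref{lem:approx-prop} yields the approximation property $\norm{(I-T_c) v}_L^2 \le C_A \norm{v}_A^2$ for all $v \in S^d$, where $C_I$ and $C_A$ depend only on $d$, inheriting this dependence from the stability constant in Theorem~\ref{thm:stable-nd} and from Lemma~\ref{lemma:approx:dd}. Theorem~\ref{thm:twogrid} then applies directly: taking $\tau := C_I^{-1}$ and any fixed integer $\nu > \nu_0 = C_A C_I$, for instance $\nu := \lceil C_A C_I \rceil + 1$, yields convergence with rate $q = \nu_0/\nu < 1$, and since $C_I$, $C_A$, $\tau$ and $\nu$ are all independent of $h$ and $p$, so is $q$. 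I do not anticipate a genuine obstacle in this argument; the only point requiring care is the bookkeeping in the second step — correctly identifying, for each multiindex $\alpha$, which tensor factors of $L_\alpha$ arose from a replaced $K_0$ and which did not, and checking that the number of terms of each kind is bounded by $d$ so that the resulting constant is dimension-dependent but uniform in $h$ and $p$.
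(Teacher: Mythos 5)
Your proposal is correct and follows essentially the same route as the paper: Section~\ref{sec:smoother} constructs the $L_\alpha$ by replacing $K_0$ with $\sigma M_0$, notes $A_\alpha \le L_\alpha$ by construction, asserts the bound $L_\alpha \le c\, Q_\alpha'(A + h^{-2}M^d)Q_\alpha$, and then invokes Lemmas~\ref{lem:smoothing-prop} and~\ref{lem:approx-prop} together with Theorem~\ref{thm:twogrid}. Your write-up merely fills in the term-by-term bookkeeping that the paper leaves as ``easy to see,'' and does so correctly.
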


The robust convergence of the W-cycle multigrid method follows using standard
arguments, cf.~\cite{Hackbusch:1985,HTZ:2016}.

%%%%%%%%%%%%%%%%%%%%%%%%%%%%%%%%%%%%%%%%%%%%%%%%%%%%%%%%%%%%%%%%%%%%%%%%%%%%%%%%

\section{Computational realization}
\label{sec:realization}

In Section~\ref{sec:construction}, we have proposed a smoother
and shown that it leads to a robust two-grid method.
In this section, we provide details on the realization of the
method and show that it permits an efficient implementation.

\subsection{Computation of a basis for $S_0$ and $S_1$}
\label{sec:compute-Stilde}

In order to be able to work with the space $S_0$ and its orthogonal
complement, we require bases for them. The aim of this subsection is to provide
an algorithm for computing such bases as linear combinations of B-splines.

Recall that the univariate spline space $S$ with $m$ knot spans of width
$h=1/m$, degree $p$ and maximum smoothness $C^{p-1}$ has dimension $n = m + p$.
Let
\[
    \mathcal B := \{ \varphi_1, \ldots, \varphi_n \}
\]
denote the normalized (i.e., satisfying a partition of unity,
cf.~\cite{DeBoor:Practical}) B-spline basis of $S$.
We have $\operatorname{supp} \varphi_j = [(j - p - 1) h, j h] \cap [0,1]$.
All interior B-splines
\[
    \mathcal B^I := \{ \varphi_{p+1}, \ldots, \varphi_{n-p} \}
\]
vanish with all their derivatives up to the $p-1$st at the boundaries of
the interval $[0,1]$ and therefore lie in $S_0$.
(Here and in the following we assume that $p+1 \le m$ such that $\mathcal B^I$
is nonempty.)

It remains to find linear combinations of the first and last $p$ B-splines which
complete $\mathcal B^I$ to a basis of $S_0$.
Recall that $u \in S$ lies in $S_0$ iff
\[
    u^{(2l+1)}(0) = u^{(2l+1)}(1) = 0
  \quad \forall l \in \mathbb{N}_0 \text{ with } 2l+1 < p.
\]
Consider first the left boundary.  We need to satisfy
$k := \lfloor p/2 \rfloor$ conditions on the derivatives of the splines.
Let
\[
    \tilde D = \left( h^{2i-1} \varphi_j^{(2i-1)}(0) \right)_{i=1,\dots,k, \;  j = 1,\dots,p}
    \in \mathbb R^{k \times p}
\]
denote the matrix of the relevant B-spline derivatives at $0$, scaled with a
suitable power of $h$ in order to avoid numerical instabilities.  We pad
$\tilde D$ with $p-k$ zero rows to obtain a square matrix $D \in \mathbb R^{p \times p}$.
Computing the singular value decomposition (SVD), we obtain
\[
    D = U \Sigma V^\top
\]
with $U,V \in \mathbb R^{p \times p}$ orthogonal and
$\Sigma \in \mathbb R^{p \times p}$ being the diagonal matrix of singular values
in descending order.
By construction, $\Sigma$ contains $k$ nonzero and $p - k$ zero singular values.
Therefore, the rightmost $p-k$ columns of $V$ span the kernel of $D$,
and the linear combinations
\[
    \mathcal B_0^L := \left\{
        \sum_{i=1}^p V_{i,j} \varphi_i : j=p-k+1, \ldots, p
    \right\}
\]
lie in $S_0$.
By the analogous procedure at the right boundary, we compute a set
$\mathcal B_0^R$ of $p-k$ linear combinations of the last $p$ B-splines.
Then, the functions in the set
\[
    \mathcal B_0 := \mathcal B_0^L \cup B^I \cup \mathcal B_0^R
\]
are by construction linearly independent and lie in $S_0$.
Since $n_0 := |\mathcal B_0| = n - 2k = \dim{S_0}$, we have
\[
    \operatorname{span} {\mathcal B_0} = S_0.
\]

In practice, we collect the coefficients in a sparse block diagonal matrix
\[
    P_0 =
    \begin{bmatrix}
        V^L[:, p-k+1:p]  &          &  \\
                         & I_{n-2p} &  \\
                         &          & V^R[:,p-k+1:p]
    \end{bmatrix}
    \in \mathbb R^{n \times n_0},
\]
where $V^L[:, p-k+1:p] \in \mathbb R^{p \times (p-k)}$ denotes the last $p-k$
columns of the matrix $V$ computed for the left boundary, analogously $V^R$
that for the right boundary, and $I_d$ is the $d\times d$ identity matrix.
Then clearly, splines in $S_0$ can be uniquely represented in terms of the
B-spline basis as
\[
    u \in S_0
    \quad \Longleftrightarrow \quad
    \exists \underline u \in \mathbb R^{n_0}:
    u = \sum_{j=1}^n  (P_0 \underline u)_j \varphi_j.
\]

Since the SVD produces an orthonormal basis, collecting the remaining columns
of $V^L$ and $V^R$ in a second sparse block matrix
\[
    P_\bot =
    \begin{bmatrix}
        V^L[:, 1:k]    &  0 \\
             0         &  0 \\
             0         & V^R[:,1:k]
    \end{bmatrix}
    \in \mathbb R^{n \times 2k}
\]
satisfies $P_0^\top P_\bot = 0$.
In fact, the columns of the concatenation
$\begin{bmatrix}
    P_0 & P_\bot
\end{bmatrix}$
form an orthonormal basis of $\mathbb R^n$.
Let
\[
    P_1 := \underline M^{-1} P_\bot  \in \mathbb R^{n \times 2k},
\]
where $\underline M$ denotes the $\mathcal B$-mass matrix.
Note that $P_1$ is no longer sparse.
Furthermore, let
$\underline u \in \mathbb R^{n_0}$ and
$\underline v \in \mathbb R^{2k}$ with associated splines
\[
    u = \sum_{j=1}^n  (P_0 \underline u)_j \varphi_j,
    \qquad
    v = \sum_{j=1}^n  (P_1 \underline v)_j \varphi_j.
\]
By construction, $u \in S_0$.
We have
\[
    \sprod{u}{v}_{L_2(\Omega)} =
    \sprod{\underline M P_0 \underline u}{\underline M^{-1} P_\bot \underline v} =
    \underline u^\top P_0^\top P_\bot \underline v = 0.
\]
Since this holds for all $u \in S_0$,
$v$ lies in the $L_2$-orthogonal complement of $S_0$.
All in all, we have constructed basis representations or ``prolongation matrices''
\[
    P_0 \in \mathbb R^{n \times (n-2k)},
    \qquad
    P_1 = \underline M^{-1} P_\bot \in \mathbb R^{n \times 2k}
\]
for $S_0$ and its $L_2$-orthogonal complement $S_1$, respectively.

For $d > 1$, we let $\alpha \in \{0,1\}^d$ and introduce the Kronecker products
\[
    P_{\alpha} := P_{\alpha_1} \otimes \ldots \otimes P_{\alpha_d}
    \in \mathbb R^{n^d \times n_\alpha},
\]
where $n_\alpha = \dim{S_\alpha}$,
which represent bases for the spaces $S_\alpha$ in terms of the coefficients of
linear combinations of the tensor product B-spline basis $\mathcal B^{\otimes d}$.

\subsection{Implementation of the subspace correction smoother}
\label{sec:smoother-impl}

For any $\alpha \in \{0,1\}^d$, the matrices $P_\alpha$ as defined in
Section~\ref{sec:compute-Stilde} describe a basis for $S_\alpha$.
Let $\underline L_\alpha \in \mathbb R^{n_\alpha \times n_\alpha}$ be the
(symmetric and positive definite) matrix representation
of $L_\alpha: S_\alpha \to S_\alpha'$ (as defined in Section~\ref{sec:smoother})
with respect to that basis.
Then the matrix representation of
\[
    L^{-1} = \sum_\alpha L_\alpha^{-1} Q_\alpha'
    = \sum_\alpha I_{S_\alpha \to S^{d}} L_\alpha^{-1} I_{S^{d'} \to S_\alpha'} Q_\alpha' I_{S^{d'} \to S_\alpha'}
\]
is given by
\begin{equation}
    \label{eq:smoother-matrix}
    \underline L^{-1} = \sum_\alpha P_\alpha \underline L_\alpha^{-1} P_\alpha^\top \underline M P_\alpha \underline M_\alpha^{-1} P_\alpha^\top
    = \sum_\alpha P_\alpha \underline L_\alpha^{-1} P_\alpha^\top,
\end{equation}
where we used that the matrix representation of the embedding $I_{S_\alpha \to S^{d}}$
is $P_\alpha$ and the matrix representation of the $L_2$-projector $Q_\alpha$ is
\[
    \underline M_\alpha^{-1} P_\alpha^\top \underline M,
    \quad \text{where} \quad
    \underline M_\alpha = P_\alpha^\top \underline M P_\alpha.
\]
Hence \eqref{eq:smoother-matrix} can be used to implement the subspace correction
smoother using only the prolongation matrices $P_\alpha$ and a fast method for
applying $\underline L_\alpha^{-1}$.
It is never necessary to explicitly apply the $L_2$-projectors $Q_\alpha$.
Furthermore, due to the use of additive subspace correction, the residual needs
to be computed only once, and the individual subspace corrections may be done
in parallel.

\subsection{Inversion of the subspace operators}
\label{sec:matrix-inversion}

The final required algorithmic component is a fast method for applying the
inverse of the local smoothing matrices $\underline L_\alpha \in \mathbb
R^{n_\alpha \times n_\alpha}$.  We illustrate this in the three-dimensional
setting as described in Section~\ref{sec:smoother}, but the principles are
the same regardless of dimension.
A detailed discussion of the computational costs for arbitrary dimension
is given in Section~\ref{sec:complexity}.

\textbf{Interior space and face spaces.}
The interior space $S_{000}$ and the face spaces $S_{001}, S_{010}, S_{100}$
contain the complement space $S_1$ as a factor space at most once,
and thus the matrices associated with their smoothing operators can be represented
as Kronecker products of three one-dimensional discretization matrices, e.g.,
\[
    \underline L_{000} = (1+3 \sigma) \underline M_0 \otimes \underline M_0 \otimes \underline M_0,
    \quad
    \underline L_{001} = \underline M_0 \otimes \underline M_0 \otimes ((1+2 \sigma) \underline M_1 + \underline K_1).
\]
Here the symmetric matrices
$\underline M_\beta, \underline K_\beta \in \mathbb R^{\dim S_\beta \times \dim S_\beta}$,
$\beta\in\{0,1\}$,
are the matrix representations of $M_\beta$ and $K_\beta$, respectively, with
respect to the bases described by $P_\beta$ as computed in
Section~\ref{sec:compute-Stilde} above.
For $\beta=0$,
$\underline M_\beta$ and $\underline K_\beta$
have dimension $\mathcal O(n)$ and bandwidth $\mathcal O(p)$,
whereas for $\beta=1$ they have dimension $\mathcal O(p)$ and are dense.

Since the Kronecker product can be inverted componentwise, we obtain, e.g.,
\[
    \underline L_{001}^{-1} = \underline M_0^{-1} \otimes \underline M_0^{-1} \otimes ((1+2 \sigma) \underline M_1 + \underline K_1)^{-1}.
\]
Instead of computing this (dense) inverse explicitly, we employ the algorithm
described by de Boor \cite{DeBoor:1979} for computing the application of a
Kronecker product of matrices to a vector, given only routines for applying the
individual Kronecker factors. For the latter, we use Cholesky factorization.

\textbf{Edge spaces.}
The spaces $S_{011}, S_{110}, S_{101}$ contain the complement space $S_1$
as a factor twice. In $S_{011}$, the matrix to be inverted has the form
\[
    \underline L_{011} = \underline M_0 \otimes ((1+\sigma) \underline M_1 \otimes \underline M_1 + \underline K_1 \otimes \underline M_1 + \underline M_1 \otimes \underline K_1).
\]
It again has Kronecker product structure and can be inverted using the
algorithm described in the previous case.
The same holds for $S_{110}$.

In the case of the space $S_{101}$, the associated matrix
\[
    \underline L_{101} = \underline K_1 \otimes \underline M_0 \otimes \underline M_1 + (1+\sigma) \underline M_1 \otimes \underline M_0 \otimes \underline M_1 + \underline M_1 \otimes \underline M_0 \otimes \underline K_1
\]
does not permit a Kronecker product factorization due to the order of the
involved spaces. However, by a simple renumbering of the degrees of freedom,
$S_{101}$ can be identified with $S_{011}$, and then $\underline L_{011}^{-1}$
can be applied as above.

Alternatively, the matrix $\underline L_{101}$
could be directly computed and inverted in its entirety using Cholesky
factorization. This would exceed asymptotically (for $p\rightarrow \infty$)
the computational costs derived in the following subsection, however this
slowdown appears to be negligible in practice.
For $d>3$, this shortcut seems no longer viable.

\textbf{Corner space.}
The space $S_{111}$ is the tensor product of the three complement spaces and
has dimension $\dim(S_1)^3 \le p^3$. The associated matrix
\[
    \underline L_{111} =   M_1 \otimes M_1 \otimes M_1 + K_1 \otimes M_1 \otimes M_1 + M_1 \otimes K_1 \otimes M_1 + M_1 \otimes M_1 \otimes K_1%P_{111}^\top \underline A P_{111}
\]
is dense and is inverted by means of its Cholesky factorization.

\subsection{Computational costs}
\label{sec:complexity}

We now study the computational complexity for applying the subspace correction
smoother in the general $d$-dimensional setting.
In our analysis, we ignore multiplicative constants which depend only on $d$.
Repeatedly, we make use of the fact that the Cholesky factorization of a
symmetric matrix of dimension $N$ and bandwidth $q$ can be computed in
$\mathcal O(Nq^2)$ operations, and its inverse can then be applied in $\mathcal O(Nq)$ operations.
If the matrix is not banded but dense, the factorization and inversion require
$\mathcal O(N^3)$ and $\mathcal O(N^2)$ operations, respectively (cf.~\cite{golub2012matrix}).

By the renumbering of degrees of freedom described in Section~\ref{sec:matrix-inversion},
we can always rearrange the factor spaces such that we only need to
consider spaces of the form
\[
    \underbrace{S_0 \otimes \ldots \otimes S_0}_{k\text{ times}} \otimes
    \underbrace{S_1 \otimes \ldots \otimes S_1}_{d-k\text{ times}}.
\]
The smoothing matrices to be inverted, constructed as in Section~\ref{sec:matrix-inversion},
have the form
\[
    \underline L_{\{k,d-k\}} :=
      \underbrace{\underline M_0 \otimes \ldots \otimes \underline M_0}_{k\text{ times}}
      \otimes \underline X_{d-k},
\]
where $\underline X_{j} \in \mathbb R^{(\dim S_1)^{j} \times (\dim S_1)^{j} }$ is
a dense, symmetric matrix.
Recall that $\dim S_1 \le p$.

\textbf{Setup costs.}
The computation of the basis for $S_0$ and its $L_2$-orthogonal
complement as described in Section~\ref{sec:compute-Stilde} requires computing
the SVD of two matrices of dimension $\mathcal O(p)$
as well as $\mathcal O(p)$ applications of the inverse of $\underline M$,
which has dimension $n=m+p$ and bandwidth $\mathcal O(p)$, where $m$ is
the number of subintervals.
The costs for this step are thus $\mathcal O(p^3 + n p^2)=\mathcal O(p^3 + m p^2)$.

The one-dimensional mass matrix in $S_0$, $\underline M_0$,
has dimension $\mathcal O(m)$ and bandwidth $\mathcal O(p)$
and thus requires $\mathcal O(m p^2)$ operations to factorize.

The matrices $\underline X_j$, $j=1,\ldots,d$, are dense and therefore
require $\mathcal O(p^{3j})$ operations to factorize.

The overall setup costs are therefore
$
    \mathcal O(m p^2 + p^{3d})
$.

\textbf{Application costs.}
After factorization, the cost for applying the inverse
$\underline M_0^{-1}$ is $\mathcal O(mp)$,
and for $\underline X_j^{-1}$, it is $\mathcal O(p^{2j})$.
To apply $\underline L_{\{k,d-k\}}^{-1}$ using the Kronecker product algorithm
from \cite{DeBoor:1979}, we need to perform $m^{d-1}$ applications of each
of the $k$ factors $\underline M_0^{-1}$ and $m^k$ applications of $\underline X_j^{-1}$.
Thus, the cost is $ \mathcal O(k m^d p + m^k p^{2(d-k)}) $.

The inverse of $\underline L_{\{k,d-k\}}$ needs to be applied $\binom d k$ times
since that is the number of multiindices $\alpha \in \{0,1\}^d$ which permute
to $(0,\ldots,0,1,\ldots,1)$ with exactly $k$ leading zeros.
The binomial coefficient satisfies $\binom d k = \mathcal O(2^d / \sqrt{d})$
and in particular can be bounded from above by a constant which depends only on $d$.
The overall cost for one application of the subspace correction smoother is then
\[
    \sum_{k=0}^d \binom d k \mathcal O(k m^d p + m^k p^{2(d-k)})
    = \mathcal O\left(m^d p + \max_{k=0,\dots,d} m^k p^{2(d-k)} \right)
    = \mathcal O (m^d p + p^{2d}).
\]

\textbf{Overall costs.}
For $d \ge 2$, we have $m p^2\le m^2 + p^4 \le m^d + p^{2d}$. Therefore, the overall
costs for setting up and applying the smoother are bounded by
\[
    \mathcal O (m^d p + p^{3d}).
\]
Assuming $p^2 \lesssim m$, the overall costs are asymptotically not more expensive than one
application of the stiffness matrix, which has complexity $\mathcal O(n^d p^d)=\mathcal O(m^d p^d+p^{2d})$.

\newcommand{\mm}{\mathfrak{m}}
In a multigrid setting,
assuming $\mathcal O (\log \mm )$ levels with $m= \mm,  \tfrac \mm2, \tfrac \mm4, \tfrac \mm8 \ldots$
intervals per dimension, one obtains for $d\ge 2$ by summing up the overall costs of
\[
    \mathcal O (\mm^d p + (\log \mm ) p^{3d} )
    \qquad \mbox{and}\qquad
    \mathcal O (\mm^d p + \mm p^{2d} + (\log \mm ) p^{3d})
\]
for smoothing in the V-cycle and the W-cycle, respectively.
The full complexity including the costs for the exact coarse-grid solver and the
intergrid transfers is asymptotically the same.
Under mild assumptions on the relation between $p$ and $\mm$, again the overall effort is
asymptotically not higher than that for one application of the stiffness matrix.

%%%%%%%%%%%%%%%%%%%%%%%%%%%%%%%%%%%%%%%%%%%%%%%%%%%%%%%%%%%%%%%%%%%%%%%%%%%%%%%%

\section{Numerical experiments}
\label{sec:experiments}

\subsection{Experiments for the model problem}

We solve the problem \eqref{eq:vf}, i.e.,
\[
    -\Delta u + u = f  \quad \text{in } \Omega=(0,1)^d,
    \qquad
    \partial_n u = 0   \quad \text{on } \partial\Omega
\]
for $d=1,2,3$ with the right-hand side
\begin{equation}\label{eq:def:f}
    f( x ) = d \pi^2 \prod_{j=1}^d \sin(\pi (x_j + \tfrac12)).
\end{equation}
We perform a (tensor product) B-spline discretization using equidistant
knot spans and maximum-continuity splines for varying spline degrees $p$.
We refer to the coarse discretization with only one single interval as level $\ell=0$
and perform uniform, dyadic refinement to obtain the finer discretization levels $\ell$
with $2^{\ell d}$ elements and $h_\ell=2^{-\ell}$.

We set up a V-cycle multigrid method as described in Section~\ref{sec:mg} and
using on each level the proposed smoother \eqref{eq:smoother} as constructed in
Section~\ref{sec:construction}.
We always use one pre- and one post-smoothing step with $\tau = 1$. The parameter
$\sigma$ was chosen as
$\tfrac1{0.09}\,h^{-2}$ in 1D,
$\tfrac1{0.18}\,h^{-2}$ in 2D, and
$\tfrac1{0.19}\,h^{-2}$ in 3D.
In each test, the coarsest grid was chosen in such a way that the spaces
$S_0$ on each higher level are non-empty, i.e., such that the smoother is well-defined.
We perform tests both using the V-cycle multigrid method and a conjugate gradient
solver preconditioned with one V-cycle.
The iteration numbers required to reduce the $\ell^2$-norm of the initial residual
by a factor of $10^{-8}$ for the 1D, 2D and 3D problem are given
in Tables~\ref{tab:iter-1d}--\ref{tab:iter-3d}, respectively.

\begin{table}[htbp]
    \caption{Iteration numbers: unit interval (1D)}
    \label{tab:iter-1d}
    \centering
    \begin{tabular}{llrrrrrrrrrrrrr}
    \toprule
    & $\ell\; \diagdown\; p$
           & 2  & 3  & 4  & 5  & 6  & 7  & 8  & 9  & 10 & 11 & 12 & 13 & 14 \\
    \midrule   % V-cycle iteration
    \multirow{3}{*}{V-cycle}
           & 9      & 33 & 34 & 34 & 33 & 33 & 33 & 32 & 31 & 31 & 31 & 28 & 28 & 29 \\
           & 8      & 33 & 34 & 34 & 32 & 33 & 33 & 31 & 30 & 30 & 31 & 28 & 28 & 27 \\
           & 7      & 33 & 34 & 34 & 32 & 33 & 33 & 31 & 28 & 30 & 29 & 28 & 25 & 26 \\
    \midrule   % CG preconditioned with one V-cycle
    \multirow{3}{*}{PCG}
           & 9      & 13 & 13 & 13 & 13 & 13 & 13 & 13 & 13 & 12 & 12 & 12 & 12 & 12 \\
           & 8      & 13 & 13 & 13 & 13 & 13 & 13 & 12 & 12 & 12 & 12 & 12 & 12 & 11 \\
           & 7      & 13 & 13 & 13 & 13 & 13 & 12 & 12 & 12 & 12 & 11 & 11 & 11 & 11 \\
    \bottomrule
    \end{tabular}
\end{table}

\begin{table}[htbp]
    \caption{Iteration numbers: unit square (2D).}
    \label{tab:iter-2d}
    \centering
    \begin{tabular}{llrrrrrrrrr}
    \toprule
    & $\ell\; \diagdown\; p$
           & 2  & 3  & 4  & 5  & 6  & 7  & 8  & 9  & 10 \\
    \midrule   % V-cycle iteration
    \multirow{4}{*}{V-cycle}
           & 8      & 38 & 39 & 39 & 39 & 38 & 38 & 37 & 37 & 36 \\
           & 7      & 38 & 39 & 39 & 38 & 38 & 37 & 36 & 36 & 34 \\
           & 6      & 38 & 38 & 38 & 37 & 37 & 35 & 34 & 34 & 32 \\
           & 5      & 36 & 37 & 34 & 34 & 32 & 30 & 28 & 26 & 24 \\
    \midrule   % CG preconditioned with one V-cycle
    \multirow{4}{*}{PCG}
           & 8      & 14 & 14 & 14 & 14 & 14 & 14 & 14 & 14 & 13 \\
           & 7      & 14 & 14 & 14 & 14 & 14 & 14 & 14 & 13 & 13 \\
           & 6      & 14 & 14 & 14 & 14 & 14 & 13 & 13 & 13 & 12 \\
           & 5      & 14 & 14 & 13 & 13 & 13 & 12 & 11 & 11 & 10 \\
    \bottomrule
    \end{tabular}
\end{table}

\begin{table}[h!]
    \caption{Iteration numbers: unit cube (3D).}
    \label{tab:iter-3d}
    \centering
    \begin{tabular}{llrrrrrr}
    \toprule
    &$\ell\; \diagdown\; p$
                  &  2 &  3 &  4 &  5 &  6 &  7 \\
    \midrule
    \multirow{4}{*}{V-cycle}
      & 6             & 46 & 44 & 43 & 43 & 42 & 41 \\
      & 5             & 44 & 43 & 42 & 39 & 38 & 35 \\
      & 4             & 39 & 36 & 32 & 29 & 25 & 23 \\
      & 3             & 30 & 42 & 18 & 22 & 12 & 17 \\
    \midrule   % CG preconditioned with one V-cycle
    \multirow{4}{*}{PCG}
      & 6             & 17 & 16 & 15 & 15 & 15 & 15 \\
      & 5             & 17 & 16 & 15 & 15 & 14 & 13 \\
      & 4             & 14 & 16 & 13 & 14 & 11 & 12 \\
      & 3             & 12 & 13 &  9 & 10 &  7 &  8 \\
    \bottomrule
    \end{tabular}
\end{table}

The method was implemented in C++ based on the G+SMO
library\footnote{\url{http://www.gs.jku.at/gismo}} which is developed in the
framework of the National Research Network ``Geometry + Simulation'' at
Johannes Kepler University, Linz.

We observe that the iteration numbers are robust with respect to both the
discretization level $\ell$ (and thus $h$) and the spline degree $p$.  They do
increase with the space dimension $d$, but this dependence, which we have not
fully analyzed, appears to be relatively mild.
In particular, the 2D iteration numbers are significantly lower than those
obtained using the boundary-corrected mass smoother in \cite{HTZ:2016}.

\subsection{Experiments for non-trivial computational domains}

We perform experiments with varying, matrix-valued diffusion coefficients
on the non-trivial geometries shown in Fig.~\ref{fig:geo}.
The geometry map for the quarter annulus in the two-dimensional example is
described exactly with NURBS, that for the three-dimensional object with
B-splines.
On these objects, we solve
\[
    -\operatorname{div}(A(x) \nabla u(x)) = f(x) \quad  \qquad \text{in } \Omega
\]
with Dirichlet boundary conditions $g(x)$ on $\Gamma_D$ as indicated in
Fig.~\ref{fig:geo} and homogeneous Neumann boundary conditions on the
remaining part of the boundary.
Furthermore, $f$ is given by~\eqref{eq:def:f} and the diffusion coefficient is given by
\[
    A^{\text{(2D)}}(x) =
     \begin{pmatrix}
            1 + x_1^2   &   -x_1 x_2   \\
            -x_1 x_2      &  1 + x_2^2
     \end{pmatrix},
     \quad
    A^{\text{(3D)}}(x) =
     \begin{pmatrix}
             1 + x_1^2         &   -\tfrac13 x_1 x_2  &   -\tfrac13 x_1 x_3  \\
            -\tfrac13 x_1 x_2  &    1 + x_2^2         &   -\tfrac13 x_2 x_3  \\
            -\tfrac13 x_1 x_3  &   -\tfrac13 x_2 x_3  &    1 + x_3^2    \\
     \end{pmatrix}.
\]

\begin{figure}[htbp]
  \begin{center}
  \includegraphics[width=.3\textwidth]{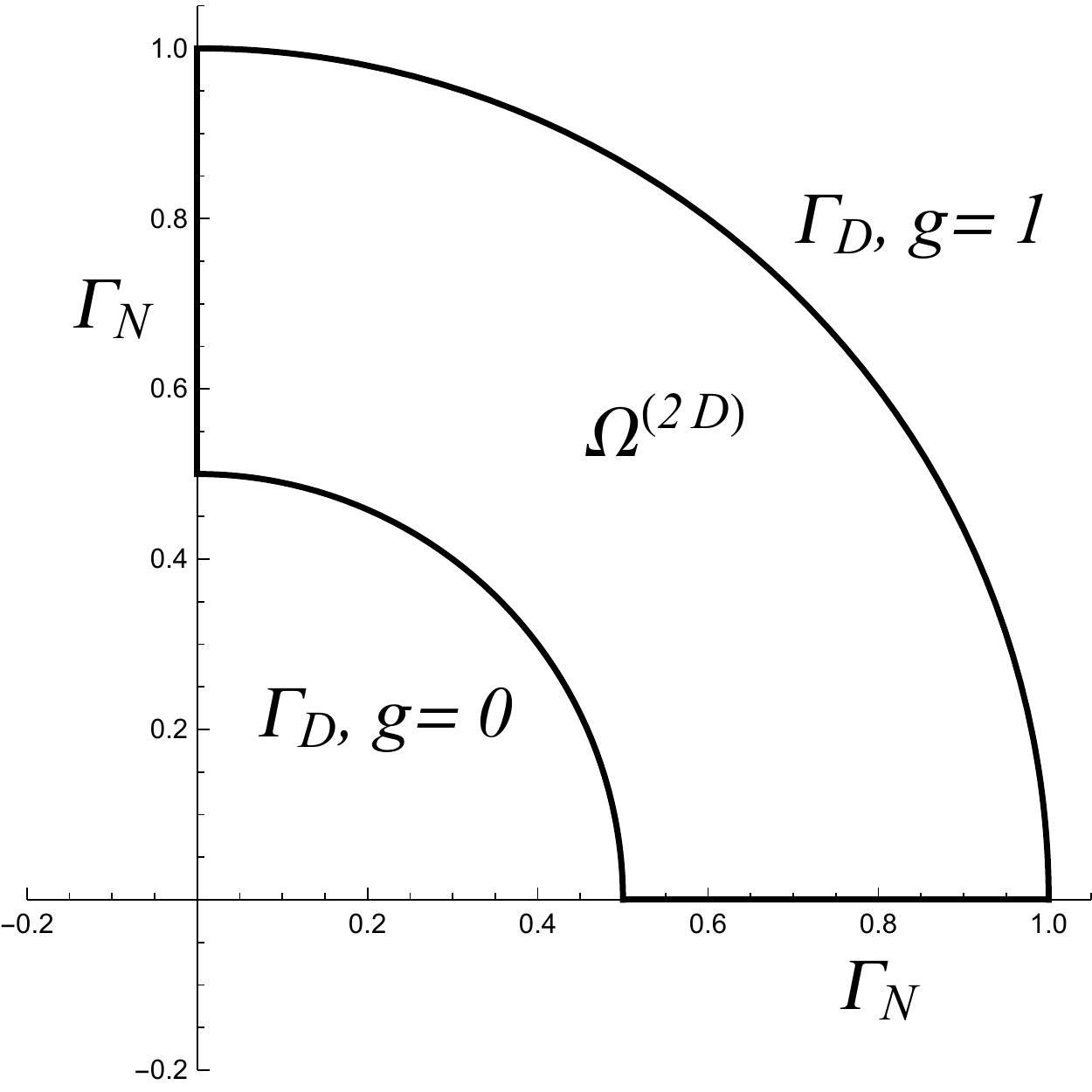}
  \qquad
  \includegraphics[width=.3\textwidth]{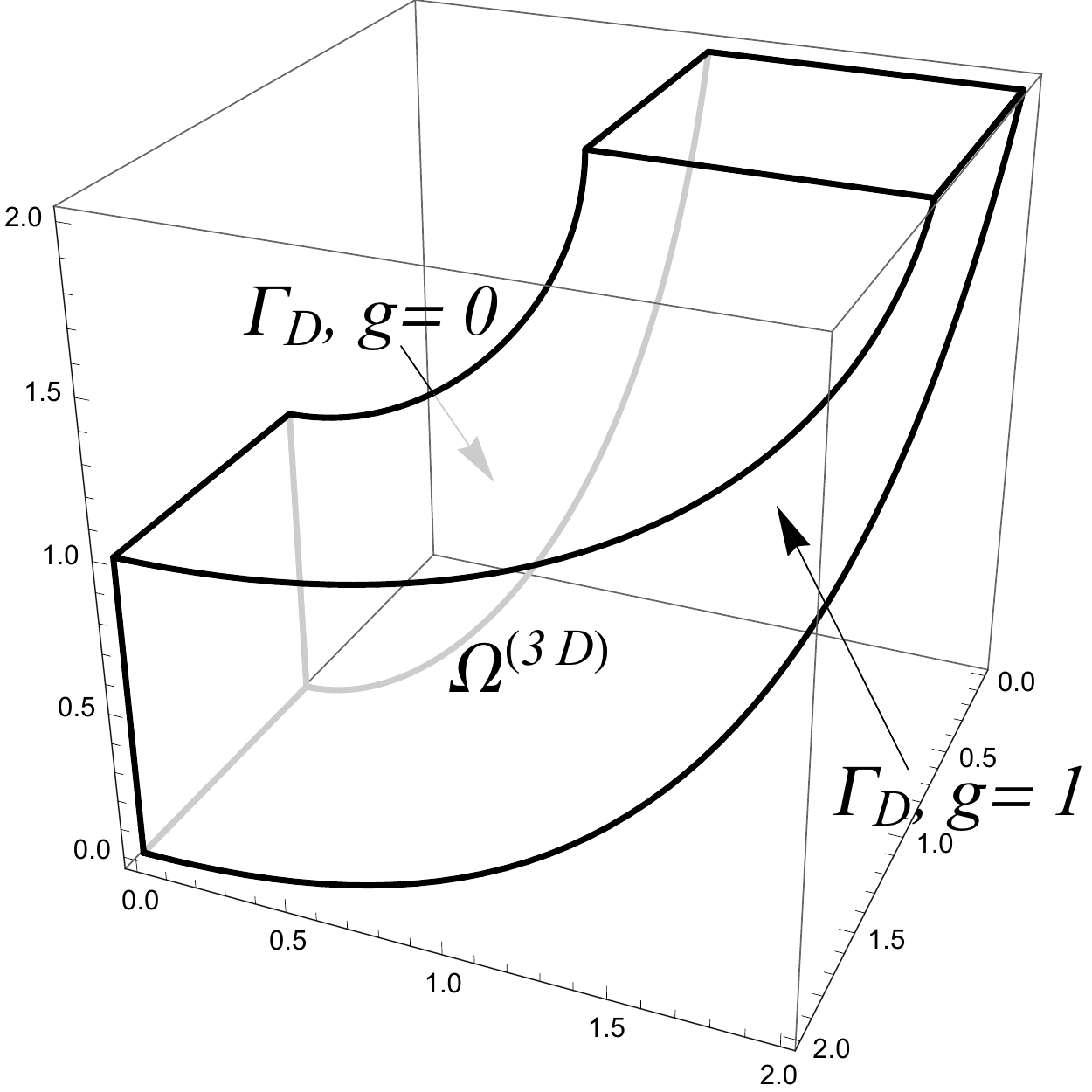}
  \end{center}
  \caption{Computational domains for 2D and 3D example.}
  \label{fig:geo}
\end{figure}

Table~\ref{tab:iter-2d-3d-qa} gives the iteration numbers
for a conjugate gradient method, preconditioned with one V-cycle of the
proposed multigrid solver, where the multigrid solver was set up as solver for the
model problem $-\Delta u+u=f$ on the parameter domain.

\begin{table}[htbp]
    \caption{Iteration numbers for the nontrivial 2D (top) and
        3D (bottom) domains as shown in Fig.~\ref{fig:geo}.}
    \label{tab:iter-2d-3d-qa}
    \begin{center}
     \begin{minipage}{0.7\textwidth}
    \begin{tabular}{lrrrrrrrrr}
    \toprule
    $\ell\; \diagdown\; p$
                  & 2  & 3  & 4  & 5  & 6  & 7  & 8  & 9  & 10 \\
    \midrule         % V-cycle iteration
           8      & 53 & 55 & 56 & 56 & 55 & 55 & 55 & 54 & 54 \\
           7      & 52 & 53 & 54 & 53 & 53 & 52 & 51 & 50 & 51 \\
           6      & 47 & 50 & 50 & 48 & 48 & 48 & 46 & 46 & 45 \\
           5      & 43 & 45 & 45 & 44 & 44 & 41 & 41 & 40 & 41 \\
    \bottomrule
    \end{tabular}
    \end{minipage}

    \vspace{2ex}
    \begin{minipage}{0.7\textwidth}
    \begin{tabular}{lrrrrrr}
    \toprule
    $\ell\; \diagdown\; p$
                  &  2 &  3 &  4 &  5 &  6 &  7 \\
    \midrule
      5             & 87 & 90 & 91 & 90 & 89 & 90 \\
      4             & 73 & 76 & 76 & 79 & 81 & 83 \\
      3             & 55 & 61 & 66 & 67 & 72 & 75 \\
    \bottomrule
    \end{tabular}
    \end{minipage}
    \end{center}
\end{table}

Obviously, the condition number of
the preconditioned system depends only on the geometry transformation, the
diffusion coefficient and on the contraction number of the multigrid method (as
a solver for the model problem on the parameter domain). All of these
quantities are independent of the grid size and the polynomial degree $p$. This
is reflected in the numerical results, which are robust in those two
parameters.

%%%%%%%%%%%%%%%%%%%%%%%%%%%%%%%%%%%%%%%%%%%%%%%%%%%%%%%%%%%%%%%%%%%%%%%%%%%%%%%%

\section*{Appendix}

The aim of this section is to prove Lemma~\ref{lemma:approx:dd}, an
approximation result for the coarse spline space Galerkin projector in $d$
dimensions.  It was shown in \cite{HTZ:2016} for $d=1$ and $d=2$, and here we
extend it to arbitrary dimensions by induction.

Before we give the proof, we need an auxiliary lemma which is a variant of the
Aubin-Nitsche duality argument in a finite-dimensional Hilbert space $V$.  By
the choice of a suitable basis, we can identify $V$ with $\mathbb R^n$, and
operators $A$ on $V$ with matrices.  We use this matrix representation
implicitly in the following, and operations like $A^{1/2}$ and $A^\top$ are to
be understood in the matrix sense.

\begin{lemma}\label{lemma:dual}
    Let $A$ and $M$ be self-adjoint and positive definite linear operators on $V$,
    $T : V \to W \subset V$ an $A$-orthogonal projector, and $\theta > 0$.  Then,
    the statements
    \begin{equation}~\label{eq:dual:1:2}
        \|T u\|_M \le \theta \|u\|_A \quad\forall u \in V
        \qquad\text{and}\qquad
        \|T u\|_A \le \theta \|u\|_{AM^{-1}A} \quad\forall u \in V
    \end{equation}
    are equivalent.
\end{lemma}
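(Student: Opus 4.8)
The plan is to view $T$ as a bounded linear map between the two finite-dimensional Hilbert spaces $(V,\langle\cdot,\cdot\rangle_A)$ and $(V,\langle\cdot,\cdot\rangle_M)$, to recognize both inequalities in \eqref{eq:dual:1:2} as operator-norm bounds, and to pass from one to the other via the standard fact that a bounded operator and its Hilbert-space adjoint have the same norm. The first inequality says precisely that the norm of $T:(V,\|\cdot\|_A)\to(V,\|\cdot\|_M)$ is at most $\theta$.

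The first step is to record the algebraic consequence of $T$ being an $A$-orthogonal projector onto $W$. For all $u,v\in V$ we have $\langle Tu,v\rangle_A=\langle Tu,Tv\rangle_A$, since $Tu\in W$ while $(I-T)v$ is $A$-orthogonal to $W$; as the right-hand side is symmetric in $u$ and $v$, this gives $T^\top A=AT$ in matrix form (equivalently, $AT$ is self-adjoint and $T^\top AT=AT$). The second step is to compute the adjoint $T^\star$ of $T$ with respect to the two inner products: from $\langle Tu,v\rangle_M=u^\top T^\top M v=\langle u,A^{-1}T^\top M v\rangle_A$ we read off $T^\star=A^{-1}T^\top M$. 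Since $\|T\|=\|T^\star\|$ for the corresponding operator norms, the first inequality in \eqref{eq:dual:1:2} is equivalent to $\|T^\star v\|_A\le\theta\,\|v\|_M$ for all $v\in V$.

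The third step is the change of variables $v=M^{-1}Au$, which is a bijection of $V$ because $M^{-1}A$ is invertible. On one hand $\|v\|_M^2=u^\top AM^{-1}Au=\|u\|_{AM^{-1}A}^2$; on the other hand, using $T^\top A=AT$,
\[
    T^\star v=A^{-1}T^\top M M^{-1}A u=A^{-1}T^\top A u=A^{-1}(AT)u=Tu .
\]
Hence the condition $\|T^\star v\|_A\le\theta\,\|v\|_M$ for all $v$ is exactly $\|Tu\|_A\le\theta\,\|u\|_{AM^{-1}A}$ for all $u$, i.e.\ the second inequality in \eqref{eq:dual:1:2}. Chaining these equivalences proves the claim. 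The argument is entirely elementary; the only points needing a little care are getting the adjoint $T^\star$ and the identity $T^\top A=AT$ right and noting that the substitution is onto, so I do not expect any real obstacle.
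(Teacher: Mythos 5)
Your proof is correct and is in essence the same duality argument as the paper's: both rest on the identity $AT = T^\top A$ coming from the $A$-self-adjointness of $T$, combined with the fact that an operator and its adjoint have equal norms. The only difference is presentational --- the paper realizes the two inequalities as spectral-norm bounds $\|M^{1/2}TA^{-1/2}\|\le\theta$ and $\|A^{1/2}TA^{-1}M^{1/2}\|\le\theta$ and passes between them by transposition, whereas you work with the abstract adjoint $T^\star = A^{-1}T^\top M$ between the $A$- and $M$-inner products and then substitute $v = M^{-1}Au$; both routes are sound.
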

\begin{proof}
    We first observe that the statements in~\eqref{eq:dual:1:2} are equivalent to
    \begin{equation}~\label{eq:dual:3}
        \|M^{1/2}T A^{-1/2}\| \le \theta
        \qquad\text{and}\qquad
        \|A^{1/2}T A^{-1}M^{1/2}\| \le \theta,
    \end{equation}
    respectively.
    Since $T$ is self-adjoint in the scalar product $(\cdot,\cdot)_A$,
    $A T = T^\top A$ and further
    \begin{equation}~\label{eq:dual:5}
        T A^{-1} = A^{-1} T^\top
    \end{equation}
    hold.
    Using \eqref{eq:dual:5} as well as the self-adjointness of $M$ and $A$, we obtain
    \begin{equation*}
        \|M^{1/2}T A^{-1/2} \| = \|M^{1/2}A^{-1}T^\top A^{1/2} \| =\|(A^{1/2}T A^{-1}M^{1/2})^\top \| = \|A^{1/2}T A^{-1}M^{1/2}\|.
    \end{equation*}
    This proves that the two statements in \eqref{eq:dual:3} and, consequently, those in~\eqref{eq:dual:1:2} are equivalent.
\end{proof}

\begin{proof}[Proof of Lemma~\ref{lemma:approx:dd}]
    Within this proof, we denote the dimensions explicitly and
    use a recursive representation,
    \begin{alignat*}{3}
        M_1 &:= M,                 \qquad &  A_1 &:= K + M, \\
        M_d &:= M_{d-1} \otimes M, \qquad &  A_d &:= A_{d-1} \otimes M + M_{d-1} \otimes K.
    \end{alignat*}
    Furthermore we let $T_d$ denote the $A_d$-orthogonal projector into $(S_c)^d$.

    In \cite{HTZ:2016}, the desired result was proved for $d=1$, namely,
    \begin{equation}\label{app:1}
        \| (I-T_1) u \|_{M_1} \le c h \|u\|_{A_1}	\qquad \forall u\in S.
    \end{equation}
    By Lemma~\ref{lemma:dual}, this is equivalent to
    \begin{equation}
        \label{app:2}
        \|  (I-T_1) u \|_{A_1 } \le ch \|u\|_{A_1 M_1^{-1} A_1}
        \qquad \forall u\in S.
    \end{equation}
    Stability of the $A_1$-orthogonal projector means that
    \begin{equation}
        \label{app:3}
        \| (I-T_1) u \|_{A_1} \le  \|u\|_{A_1}	\qquad \forall u\in S.
    \end{equation}
    We now show the desired result using induction. Assume that we have already
    shown
    \begin{equation}\label{app:4}
        \| (I-T_{d-1}) u \|_{M_{d-1}} \le c h \|u\|_{A_{d-1}}
        \qquad \forall u\in S^{d-1}
    \end{equation}
    for some $d > 1$.
    Using Lemma~\ref{lemma:dual}, this implies
    \begin{equation}
        \label{app:2a}
        \|  (I-T_{d-1}) u \|_{A_{d-1} } \le ch \|u\|_{A_{d-1} M_{d-1}^{-1} A_{d-1}}
        \qquad  \forall u\in S^{d-1}.
    \end{equation}
    Stability of the $A_{d-1}$-orthogonal projector means that
    \begin{equation}
        \label{app:3a}
        \| (I-T_{d-1}) u \|_{A_{d-1}} \le  \|u\|_{A_{d-1}}
        \qquad  \forall u\in S^{d-1}.
    \end{equation}
    Using equations \eqref{app:1}--\eqref{app:3a} and the fact that the
    operator norm of a tensor product is the product of the individual operator
    norms, we obtain for all $u\in S^d$
    \begin{align*}
        &\|  (I- T_{d-1})\otimes(I- T_1) u \|_{A_{d-1}\otimes M_1+M_{d-1} \otimes A_1}
        \le c h \|u\|_{A_{d-1}\otimes A_1}, \\
        &
        \| (I- T_{d-1})\otimes I u \|_{A_{d-1}\otimes M_1+M_{d-1} \otimes A_1}
        \le c h \|u\|_{A_{d-1} M_{d-1}^{-1} A_{d-1} \otimes M_1 + A_{d-1}\otimes A_1},
        \\
        &
        \| I\otimes (I- T_1) u \|_{A_{d-1}\otimes M_1+M_{d-1} \otimes A_1}
        \le c h \|u\|_{A_{d-1}\otimes A_1 + M_{d-1} \otimes A_1 M_1^{-1} A_1}.
    \end{align*}
    Since
    $%\[
        I- T_{d-1}\otimes T_1
        =
        (I- T_{d-1})\otimes I +I\otimes (I- T_1) - (I- T_{d-1})\otimes(I- T_1)
    $, %\]
    this implies using the triangle inequality
    \begin{multline*}
        \|  (I- T_{d-1}\otimes T_1) u \|_{A_{d-1}\otimes M_1+M_{d-1} \otimes A_1}
        \le c h \|u\|_{A_{d-1} M_{d-1}^{-1} A_{d-1} \otimes M_1
        + A_{d-1} \otimes A_1 + M_{d-1} \otimes A_1 M_1^{-1} A_1}.
    \end{multline*}
    As the norm on the left-hand side is bounded from below by
    $\|\cdot\|_{A_d}$ and the norm on the right-hand side is bounded from
    above by $c\|\cdot\|_{A_d M_d^{-1} A_d}$, we further obtain
    \[
        \|  (I- T_{d-1}\otimes T_1) u \|_{A_d} \le c h \|u\|_{A_d M_d^{-1} A_d}
        \qquad \forall u\in S^d.
    \]
    Both $T_{d-1}\otimes T_1$ and $T_d$ are projectors
    into $(S_c)^d$.  Since the latter projector produces the best approximation
    in the $A_d$-norm, we have
    \[
        \|  (I- T_d) u \|_{A_d} \le c h \|u\|_{A_d M_d^{-1} A_d}
        \qquad \forall u\in S^d,
    \]
    which, by Lemma~\ref{lemma:dual}, is equivalent to the desired result
    \[
        \|  (I- T_d) u \|_{M_d} \le c h \|u\|_{A_d}
        \qquad \forall u\in S^d.
    \]
\end{proof}

%%%%%%%%%%%%%%%%%%%%%%%%%%%%%%%%%%%%%%%%%%%%%%%%%%%%%%%%%%%%%%%%%%%%%%%%%%%%%%%%

\section*{Acknowledgments}

We gratefully acknowledge the discussions with Ludmil Zikatanov (Penn State
University) which were instrumental in developing some of the ideas underlying
this work.

%%%%%%%%%%%%%%%%%%%%%%%%%%%%%%%%%%%%%%%%%%%%%%%%%%%%%%%%%%%%%%%%%%%%%%%%%%%%%%%%

\bibliographystyle{siamplain}
\bibliography{references}

\end{document}